\begin{document}

\title{The Search direction Correction makes first-order methods faster}

\author{\name Yifei\ Wang \email zackwang24@pku.edu.cn \\
       \addr School of Mathematical Sciences\\
       Peking University , CHINA\\
       \AND
       \name Zeyu\ Jia \email jiazy@pku.edu.cn \\
       \addr School of Mathematical Sciences\\
       Peking University , CHINA\\
       \AND
       \name Zaiwen\ Wen \email wenzw@pku.edu.cn \\
       \addr Beijing International Center for Mathematical Research\\ 
       Peking University, CHINA}
\editor{}
\maketitle

\begin{abstract}
The so-called fast inertial relaxation engine is a first-order method for unconstrained smooth optimization problems. It updates the search direction by a linear combination of the past search direction, the current gradient and the normalized gradient direction. We explore more general combination rules and call this generalized technique as the search direction correction (SDC). SDC is extended to composite and stochastic optimization problems as well. Deriving from a second-order ODE, we propose a fast inertial search direction correction (FISC) algorithm as an example of methods with SDC. We prove the $\mcO(k^{-2})$ convergence rate of FISC for convex optimization problems. Numerical results on sparse optimization, logistic regression as well as deep learning demonstrate that our proposed methods are quite competitive to other state-of-the-art first-order algorithms. 
\end{abstract}

\begin{keywords}
first-order methods, search direction correction, Lyapunov function, composite optimization, stochastic optimization
\end{keywords}

\section{Introduction}
We take the following optimization problem into consideration
\begin{equation}\label{problem}
	\min_{\mfx\in\mathbb{R}^{n}}f(\mfx) = \psi(\mfx) + h(\mfx),
\end{equation}
where $\psi$ is a smooth function and $h$ is a possibly non-smooth convex function. In machine learning, $\psi$ often has the form
\begin{equation}\label{form}
	\psi(\mfx) = \frac{1}{N}\sum_{i=1}^{N}\psi_{i}(\mfx),
\end{equation}
where $\psi_{i}$ is the prediction error to the $i$-th sample. Since the dimension of the variable $x$ and the number of samples $N$ are often extremely huge, first-order and/or stochastic algorithms are frequently used for solving \eqref{problem}.

\par First-order algorithms only use the information of the function value and the gradient. The vanilla gradient descent method is the simplest algorithm with convergence guarantees. Adding momentum to the current gradient has been an efficient technique to accelerate the convergence. This type of algorithms includes the Nesterov accelerated method \citep{Nesterov}, the Polyak heavy-ball method \citep{ito}, and the nonlinear conjugate gradient method \citep{ncgm}. Except the last one, these methods can be extended to cases where $h$ is non-smooth, by replacing the gradient with the so-called proximal gradient. Meanwhile, \citet{iloco} proved that first-order algorithms cannot achieve convergence rate better than $\mathcal{O}(1/k^{2})$. In this way, the convergence rate of the Nesterov accelerated method matches this lower bound exactly.
\par Lately, a new technique borrowed from ODE and dynamical system has been used to analyze the behavior of optimization algorithms. \citet{adefm} analyzed several ODEs which correspond to different types of Nesterov accelerated methods when the step size converges to zero. With specifically designed Lyapunov functions, they obtained proportional convergence rate for these ODEs and for Nesterov accelerated methods. \citet{avpoa} and \citet{alaom} generalized this technique to a broader class of first-order algorithms. \citet{drkda} proposed a different type of Lyapunov function and obtained a convergence competitive to Nesterov accelerated methods.
\par The stochastic gradient descent method (SGD) is the stochastic version of
the vanilla gradient descent method. However, SGD may suffer from the large variance of stochastic gradients during its iterations. To tackle this problem, SVRG \citep{svrg},  SAG \citep{sag} and SAGA \citep{saga} introduce variance reduction techniques and achieve acceleration compared to SGD.
\par Recently, an optimization algorithm called fast inertial relaxation engine (FIRE) \citep{FIRE} is proposed for finding the atomic structures with the minimum potential energy. Involving an extra term of the velocity correction along the gradient direction with the same magnitude of the current velocity, and adopting a carefully designed restarting criterion, FIRE can achieve better performance than the conjugate gradient method. It is even competitive to the limited-memory BFGS \citep{LBFGS} in several test cases. However, neither the choice of molecular dynamics integrator is specified nor the convergence rate is given in the work of \citet{FIRE}.
\par Motivated by first-order algorithms and FIRE, we introduce a family of first-order methods with the search direction correction (SDC) and propose the fast inertial search direction correction (FISC) algorithm. Our contributions are listed as follows:
\begin{itemize}
	\item We adapt FIRE in molecular dynamics to solve general smooth and nonsmooth optimization problems. We explore more general combination rules of updating search direction in FIRE and generalize it into a framework of first-order methods with SDC. We allow more choices for step sizes, such as applying line search technique to find a step size satisfying the Armijo conditions or the nonmonotone Armijo conditions. The basic restarting criterion ensures the global convergence for methods with SDC. Furthermore, SDC is extended to composite optimization and stochastic optimization problems. 
	\item Second-order ODEs of methods with SDC in continuous time are derived via taking the step size to zero. Through the discretization of ODEs, our algorithms are recovered. By constructing a Lyapunov function and analyzing its derivative, we prove that the ODE corresponding to FISC has the convergence rate of $O(1/t^2)$ on smooth convex optimization problems. We also build a discrete Lyapunov function for FISC in the discrete case. On composite optimization problems, FISC is proven to have the $\mathcal{O}(1/k^{2})$ convergence rate.  
	\item Our algorithms are tested on sparse optimization, logistic regression and deep learning. Numerical experiments indicate that our algorithms are quite competitive to other state-of-the-art first-order algorithms. 
\end{itemize}

\subsection{Organization}
This paper is organized as follow. We present the update rule of methods with
SDC including FISC in Section \ref{sec:SDC}. In Section \ref{sec:ode}, the ODE
perspective of FISC is used to provide a necessary condition for the
convergence. The global convergence of methods with SDC and the convergence rate
of FISC are discussed in Section \ref{sec:converge}. Finally, in Section \ref{sec:num}, we present numerical experiments to compare FISC, FIRE and other first-order algorithms.

\subsection{Preliminaries}
We use standard notations throughout the paper. $\|\cdot\|$ is the standard Euclidean norm and $\la\cdot, \cdot \ra$ is the standard Euclidean inner product. $\mcF_L$ stands for the class of convex and differentiable functions with $L$-Lipschitz continuous gradients. $\mcF$ represents the class of convex and differentiable functions. $\mbR^+$ is the collection of non-negative real number. $[N]$ denotes $\{1, 2, \dots N\}$.

\section{The framework of SDC}
\label{sec:SDC}
In this section, we introduce the framework of first-order methods with SDC to solve smooth optimization problems \eqref{problem} with $h=0$. SDC is extended to composite optimization problems, stochastic optimization problems and deep learning later. 
\subsection{A family of first-order methods with SDC}
In this subsection, we focus on solving smooth optimization problems \eqref{problem} with $h=0$. It involves two sequences of parameters $\{\beta_k\}_{k=1}$ and $\{\gamma_k\}_{k=1}$ and introduces a velocity $\mfu$ as a search direction to update $\mfx$. 

We start with an initial guess $\mfx_0$ and an initial velocity $\mfu_0=0$. In the beginning of the $(k+1)$-th iteration, we determine whether $\mfu_k$ is a descent direction by introducing a restarting criterion 
\begin{equation}
\label{rst_crit}
\varphi_k=\la -\nabla f(\mfx_k), \mfu_k\ra\geq 0,
\end{equation}
If this criterion holds, we update
\begin{equation}
\label{SDC-upd}
\mfu_{k+1}=(1-\beta_k)\mfu_k-\gamma_k\frac{\|\mfu_k\|}{\|\nabla f(\mfx_k)\|} \nabla f(\mfx_k)- \nabla f(\mfx_k).
\end{equation}
When $k=0$, we directly have $\mfu_1=-\nabla f(\mfx_0)$ given $\mfu_0=0$, so $\beta_0$ and $\gamma_0$ need not be specified. We further require $\beta_k$ and $\gamma_k$ to satisfy
\begin{equation}
\label{bg_cond}
0\leq\beta_k\leq1, \quad 0\leq\gamma_k\leq1.
\end{equation}
Then we update $\beta_{k+1}$ and $\gamma_{k+1}$ as follows.
\begin{itemize}
\item In \textbf{FIRE} \citep{FIRE}, they are updated by
$$
\gamma_{k+1}=\beta_{k+1}=d_\beta\beta_k,
$$
where $0<d_\beta<1$ is a parameter. The initial value of $\{\beta_k\}$ is set to $\beta_1=1$ and $d_\beta$ is given by $d_\beta=0.99$. 
\item In \textbf{FISC}, $\beta_k$ and $\gamma_k$ are parameterized with $l_k$, i.e.,
\begin{equation}
\label{FISC-para}
\beta_k=\frac{r}{l_k-1+r}, \quad \gamma_k=\frac{r-3}{l_k-1+r},
\end{equation}
where $r\geq 3$ and $\{l_k\}$ is a sequence of parameters with an initial value of $l_1=1$. We update $l_{k+1}=l_k+1$.
\end{itemize}
If the criterion \eqref{rst_crit} is not met, we restart the system by resetting $\mfu_{k+1}, \beta_{k+1}$ and $\gamma_{k+1}$ as:
\begin{gather}
\label{SDC-rst}
\mfu_{k+1}=-\nabla f(\mfx_k),\\
\label{bg_rst}
\beta_{k+1}=\beta_1, \gamma_{k+1}=\gamma_1.
\end{gather}
Specifically, in FISC, we reset $l_{k+1}=l_1$.

Then, we calculate the step size $s_k$. Either of the following choices of $s_k$ is acceptable:
\begin{enumerate}
\item[(i)] Fix the step size $s_k=s_0$.
\item[(ii)] Perform a backtracking line search to find a step size $s_k$ that satisfies the Armijo conditions:
\begin{equation}
\label{armijo}
f(\mfx_k-s_k\mfu_{k+1})\leq f(\mfx_k)-\sigma s_k\la \mfu_{k+1}, \nabla f(\mfx_k)\ra,
\end{equation}
where $0<\sigma <1$ is a parameter and $s_k=\bar s_k \rho^{h_k}$. Here $\bar s_k>0$ is the trial step and $h_k$ is the largest number such that \eqref{armijo} holds.
\item[(iii)] Perform a nonmonotone line search \citep{anlst} to find a step size $s_k$ that satisfies nonmonotone Armijo conditions:
\begin{equation}
\label{equ:nonbls}
f(\mfx_k-s_k \mfu_{k+1})\leq C_k-\frac{s_k}{2}\la \mfu_{k+1}, \nabla f(\mfx_k)\ra,
\end{equation}
where $s_k=\bar s_k \rho^{h_k}$. Here $\bar s_k>0$ is the trial step and $h_k$ is the largest number such that \eqref{equ:nonbls} holds. $C_k$ and $Q_k$ are updated as:
$$
Q_{k+1} = \eta_kQ_k+1, \quad C_{k+1}=(\eta_kQ_kC_k+f(\mfx_{k+1}))/Q_{k+1},
$$
with initial values $C_0=f(\mfx_0), Q_0=1$. $\eta_k$ is selected from $[\eta_{min},\eta_{max}]$. The existence of $s_k$ is proved in Subsection \ref{ssec:glo}. 
\end{enumerate}

After calculating the step size $s_k$, we update
\begin{equation}
\label{x_upd}
\mfx_{k+1}=\mfx_k+s_k\mfu_{k+1}.
\end{equation}
Then, we replace $k$ by $k+1$ and check whether convergence criteria are satisfied. A family of first-order methods with SDC is given in Algorithm \ref{FISC}.

\begin{algorithm}[!htp]
\caption{A family of first-order methods with SDC}
\label{FISC}
\begin{algorithmic}[1]
\REQUIRE initial guess $\mfx_0$, initial value $\mfu_0=0$, other required parameters.

\STATE set $k=0$, fix step size $s_0$ or calculate it by the line search.
\WHILE{\textit{Convergence criteria are not met} or $k< N_{max}$}
\STATE Calculate $\varphi_k$ by \eqref{rst_crit}.
\IF{$\varphi_k\geq0$}
\STATE Compute $\mfu_{k+1}$ by \eqref{SDC-upd} and update $\beta_{k+1}, \gamma_{k+1}$.
\ELSE
\STATE Set $\mfu_{k+1}$ by \eqref{SDC-rst} and reset $\beta_{k+1},\gamma_{k+1}$.
\ENDIF
\STATE Fix step size $s_k$ or calculate it using line search techniques.
\STATE Update $\mfx_{k+1}$ by \eqref{x_upd}, $k\to k+1$.
\ENDWHILE
\RETURN $\mfx_k$
\end{algorithmic}
\end{algorithm}

Compared to the original FIRE \citep{FIRE}, we make several adaptions: 
\begin{itemize}
\item specify the symplectic Euler scheme as the MD integrator;
\item remove the ``latency'' time of MD steps before accelerating the system;
\item apply line search techniques in calculating step sizes;
\item rescale the MD step size $\Delta t_k$ by $s_k=(\Delta t_k)^2$ and rescale the velocity $\mfv_k$ in MD to $\mfu_k= \mfv_k/\sqrt{s_k}$.
\end{itemize}

\subsection{A variant of FISC}
In this subsection, we introduce FISC-ns, a variant of FISC. Detailed derivation of FISC and FISC-ns is shown in Section \ref{sec:ode}. In FISC-ns, $\mfu_k$ is replaced by an auxiliary variable $\mfy_k$ and $\{l_k\}_{k=1}$ in FISC-ns remains the same. We start with $\mfx_0=\mfx_{-1}$. Given $\mfx_k$ and $\mfx_{k-1}$, the restarting criterion uses the quantity 
$$
\varphi_k=\la -\nabla f(\mfx_k), \mfx_k-\mfx_{k-1}\ra.
$$
If $\varphi_k\geq0$, we compute $\mfy_{k}$ by
$$
\mfy_{k}=\mfx_k+\frac{l_k-1}{l_k-1+r}(\mfx_k-\mfx_{k-1})-\frac{r-3}{l_k-1+r}\frac{||\mfx_k-\mfx_{k-1}||}{||\nabla f(\mfx_k)||}\nabla f(\mfx_k). 
$$
The step $s_k$ is calculated at $\mfy_k$ using the direction $-\nabla f(\mfy_k)$. We then update
\begin{equation}
\label{y_upd}
\mfx_{k+1}=\mfy_k-s_k\nabla f(\mfy_k),
\end{equation}
and update $l_{k+1}$. Otherwise, we calculate the step size $s_k$ at $\mfx_k$ using the direction $-\nabla f(\mfx_k)$. Then $\mfx_{k+1}$ is updated by
\begin{equation}\label{xk_upd}
\mfx_{k+1}=\mfx_k-s_k\nabla f(\mfx_k),
\end{equation}
and we reset $l_{k+1}=l_1$.

If no restarting criterion is triggered and the step size is fixed to be $s$, FISC updates
\begin{equation}
\label{FISC-upd}
\mfx_{k+1}=\mfx_k+\frac{l_k-1}{l_k-1+r}(\mfx_k-\mfx_{k-1})-\frac{r-3}{l_k-1+r}\frac{||\mfx_k-\mfx_{k-1}||}{||\nabla f(\mfx_k)||}\nabla f(\mfx_k)+s\nabla f(\mfx_k),
\end{equation}
while FISC-ns updates 
\begin{equation}
\label{FISC-ns-upd}
\mfx_{k+1}=\mfx_k+\frac{l_k-1}{l_k-1+r}(\mfx_k-\mfx_{k-1})-\frac{r-3}{l_k-1+r}\frac{||\mfx_k-\mfx_{k-1}||}{||\nabla f(\mfx_k)||}\nabla f(\mfx_k)+s\nabla f(\mfy_k).
\end{equation}
In Subsection \ref{sec:converge}, we prove that with the update rule of FISC-ns \eqref{FISC-ns-upd}, FISC-ns has an $\mcO(k^{-2})$ convergence rate. With $r>3$, FISC-ns has to calculate the gradient twice in updating $\mfx_{k+1}$, which may be computationally costly. On the other hand, the update rule of FISC \eqref{FISC-upd} can be viewed as an approximation of the update rule of FISC-ns \eqref{FISC-ns-upd} and it only evaluates the gradient once in each iteration. In short, FISC-ns has better theoretical explanations and the performance of FISC is better in practice.

\subsection{SDC for other optimization problems}
\label{sec:mod}
\subsubsection{Composite optimization problems}
\label{sec:ffcop}
Consider the composite optimization problem \eqref{problem}, where $\psi\in\mcF_L$. Given the convex function $h$ and the step size $s>0$, we define the proximal mapping of $h$ as 
$$
\mathrm{prox}^s_{h}(\mfx)=\arg\min_\mfz\left(\frac{1}{2s}\|\mfz-\mfx\|^2+h(\mfz)\right).
$$
Based on the proximal mapping, the proximal gradient is defined by
$$
G_s(\mfx)=\frac{\mfx-\mathrm{prox}_{h}^s(\mfx-s\nabla \psi(\mfx))}{s}.
$$
Here we present two ways to modify SDC for composite optimization problems. The first way is to use the proximal gradient. We simply replace the gradient $\nabla f(\mfx)$ in \eqref{SDC-upd}  by the proximal gradient $G_s(\mfx)$. In $(k+1)$-th iteration, the step size $s_k$ is fixed or calculated at $\mfx_k$ for the proximal gradient, using line search techniques. The basic restarting criterion uses the quantity
\begin{equation}
\label{prox_c}
\varphi_k=\la \mfu_k, - G_{s_k}(\mfx_k)\ra.
\end{equation}
If $\varphi_k\geq0$, then we will update $\mfu_{k+1}$ by

\begin{equation}
\label{SDC-PG-upd}
\mfu_{k+1}=(1-\beta_k)\mfu_k-\gamma_k\frac{\|\mfu_k\|}{\|G_{s_k}(\mfx_k)\|} G_{s_k}(\mfx_k)- G_{s_k}(\mfx_k).
\end{equation}
Otherwise, $\mfu_{k+1}$ is reset by 

\begin{equation}
\label{SDC-PG-rst}
\mfu_{k+1}=- G_{s_k}(\mfx_k),
\end{equation}
and $\beta_{k+1}, \gamma_{k+1}$ are reset using \eqref{bg_rst}. Then $\mfx_{k+1}$ is calculated by \eqref{x_upd}.

The second way is to use the proximal mapping. We introduce an auxiliary variable $\mfy_k\in\mbR^n$ and start with $\mfx_0=\mfx_{-1}$. Given $\mfx_k$ and $\mfx_{k-1}$, the restarting criterion uses the following quantity:
$$
\varphi_k=\la \mfx_k-\mfx_{k-1}, - G_{s_k}(\mfx_k)\ra.
$$
If $\varphi_k\geq0$, the step size $s_k$ is fixed or calculated at $\mfx_k$ for the proximal gradient using similar methods. $\mfy_{k}$ is updated by
 
\begin{equation}
\label{SDC-PM-upd}
\mfy_{k}=\mfx_k+(1-\beta_k)(\mfx_{k}-\mfx_{k-1})-\gamma_k\frac{\|\mfx_{k}-\mfx_{k-1}\|}{\|G_{s_k}(\mfx_k)\|} G_{s_k}(\mfx_k).
\end{equation} 
Then we fix the step size $\bar s_k$ or calculate it at $\mfy_k$ for the proximal mapping, compute
\begin{equation}
\label{yy_upd}
\mfx_{k+1}=\mfy_{k}-{\bar s_k}G_{\bar s_k}(\mfy_k),
\end{equation}
and update $\beta_{k+1},\gamma_{k+1}$. Note that $\mfx_{k+1}$ is the proximal mapping of $\mfy_k$, i.e., $\mfx_{k+1}=\mathrm{prox}_{h}^{\bar s_k}(y_k)$. 

Otherwise, we fix $s_k$ or calculate it at $\mfx_k$ for the proximal mapping, update 
\begin{equation}
\label{yy_rst}
\mfx_{k+1}=\mfx_{k}-s_kG_{s_k}(\mfx_k),
\end{equation}
and reset $\beta_{k+1},\gamma_{k+1}$ by \eqref{bg_rst}. 

By taking $\beta_k=\frac{r}{l_k-1+r}$ and $\gamma_k=\frac{r-3}{l_k-1+r}$ in \eqref{SDC-PM-upd}, we obtain FISC-PM. With $r=3$ in FISC-PM, FISTA \citep{FISTA} can be recovered. FISC-ns is a specific version of FISC-PM with the non-smooth part $h=0$ in \eqref{problem}. 

\subsubsection{Stochastic composite optimization problems}
Consider the stochastic composite optimization problem \eqref{problem},
where $\psi$ has the form \eqref{form} and $\psi_i\in \mcF_L$. In each iteration, we generate stochastic approximations of the gradient via selecting sub-samples $\mcT_k\subset [N]$ uniformly at random. That is, the mini-batch stochastic oracle is obtained as follows:
\begin{equation}
\label{sto_ora}
\nabla \psi^{(k)}(\mfx)=\frac{1}{|\mcT_k|}\sum_{i\in\mcT_k}\nabla \psi_i(\mfx).
\end{equation}

Motivated by \cite{pSVRG}, we also adopt the variance reduced version of stochastic gradient. With an extra parameter $m\in\mbN$, the stochastic oracle can be as follows:
\begin{equation}
\label{ora_vr}
\left\{
\begin{aligned}
&\mathrm{If}\,\,\, k\,\,\mathrm{mod}\,\,m=0\quad \mathrm{then} \quad \mathrm{set}\,\, \tilde \mfx=\mfx_k \,\,\mathrm{and}\, \, \mathrm{calculate}\,\, \nabla \psi(\tilde \mfx).\\
&\mathrm{Compute} \,\,\nabla \psi^{(k)}(\mfx_k)=\frac{1}{|\mcT_k|}\sum_{i\in\mcT_k}(\nabla \psi_i(\mfx)-\nabla \psi_i(\tilde\mfx))+\nabla \psi(\tilde \mfx).
\end{aligned}\right.
\end{equation}
Here $k$ is the current iteration number and $m$ is the number of iterations after which the full gradient $\nabla \psi$ is evaluated at the auxiliary variable $\tilde \mfx$. Similar to \citep{assnm}, this additional noise-free information is stored and utilized in the computation of the stochastic oracles in the following iterations.

Then, the proximal stochastic gradient is calculated by
$$
G_{s_k}(\mfx)=\frac{\mfx-\mathrm{prox}_{h}^{s_k}(\mfx-s\nabla \psi^{(k)}(\mfx))}{s_k}.
$$
The criterion $\varphi_k\geq0$ is evaluated using \eqref{prox_c}. If it is satisfied, we update the velocity $\mfu_{k+1}$ by \eqref{SDC-PG-upd}.  Otherwise, we reset $\mfu_{k+1}$, $\beta_{k+1},\gamma_{k+1}$ by \eqref{SDC-PG-rst} and \eqref{bg_rst}.  SDC for the stochastic optimization can be obtained by setting the non-smooth part $h=0$ in \eqref{problem}.

\subsection{SDC in deep learning}
We also adapt SDC to the deep learning setting. Because the target function is highly nonconvex, we make the following changes in updating rules. In the $(k+1)$-th iteration, we first calculate the ``momentum and gradient update'' on $\mfu_k$ as follows:
$$
\tilde\mfu_k=\alpha \mfu_k-\mfg_k,
$$
where $0<\alpha<1$ is a parameter and $\mfg_k$ is the stochastic gradient of $f$ evaluated at $\mfx_k$ through back-propagation. The basic restarting criterion uses 
$$
\varphi_k=\la\tilde \mfu_k, -\mfg_k\ra. 
$$
If $\varphi_k\geq0$, we calculate $\mfu_{k+1}$ by correcting $\tilde \mfu_k$ to
\begin{equation}\label{uk_dl}
\mfu_{k+1}=(1-\beta_k)\tilde \mfu_k-\gamma_k\frac{\|\tilde \mfu_k\|}{\|\mfg_k\|} \mfg_k.
\end{equation}
Otherwise, we set 
$$
\mfu_{k+1}=-\mfg_k.
$$
Then, $\mfx_{k+1}$ is updated by \eqref{xk_upd}. Note that if we simply uses $\tilde \mfu_k$ or $-\mfg_k$ as $\mfu_{k+1}$, then we will get SGD with momentum or vanilla SGD. \eqref{SDC-upd} performs SDC on $\mfu_k$ while \eqref{uk_dl} performs SDC on $\tilde\mfu_k$.

\subsection{The comparison with other first-order methods}
In this subsection, we compare first-order methods with SDC with the Nesterov's accelerated method with restarting \citep{arfag}, the heavy-ball method \citep{ito} and the nonlinear Conjugate Gradient (CG) method \citep{ncgm}.

\subsubsection{The Nesterov's accelerated method with restarting}
Suppose that the step size is fixed, i.e., $s_k=s$. Taking the limiting process $s\to0$, the restarting criterion \eqref{rst_crit} essentially keeps $\la \dot \mfx, \nabla f(\mfx)\ra $ negative. This coincides with the heuristic in \citep{arfag}, where they proposed a procedure termed as gradient restarting for the Nesterov's accelerated method. Its update rule is given by:
\begin{equation}
\label{Nesterov-upd}
\left\{
\begin{aligned}
&\mfx_k=\mfy_{k-1}-s\nabla f(\mfy_{k-1}),\\
&\mfy_k=\mfx_k+\frac{k-1}{k+2}(\mfx_k-\mfx_{k-1}).
\end{aligned}
\right.
\end{equation}
The algorithm restarts with $\mfx_0=\mfy_0:=\mfx_k$ and resets $k=0$, whenever 
$$
\la \nabla f(\mfy_k),  \mfx_k-\mfx_{k-1}\ra>0.$$
We shall note that this coincides with FISC-ns when $r=3$. If one takes step size $s\to 0$, this restarting criterion also keeps $\la \nabla f(\mfx), \dot \mfx\ra$ non-positive along the trajectory, and resets $k$ to prevent the coefficient $(k-1)/(k+2)$ from steadily increasing to $1$. 

\subsubsection{The heavy-ball method}
Consider the case where no restarting criterion is triggered and the step size $s_k$ is fixed. The update rule of velocity $\mfu_{k+1}$ in the heavy-ball method \citep{ito}:
\begin{equation}
\label{HB-upd}
\mfu_{k+1}=\beta^{(\text{HB})} \mfu_k-\nabla f(\mfx_k).
\end{equation}
Then, the heavy-ball method update $\mfx_{k+1}$ in the same way as \eqref{x_upd}. The coefficient of $\mfu_k$ in the heavy-ball method is a constant $\beta^{(\text{HB})}$, while $\beta_k$ in FIRE decay exponentially and $\beta_k$ in FISC decay linearly with regard to $k$. Compared to the Heavy-ball method, FIRE/FISC introduce an extra term $\gamma_k\frac{\|\mfu_k\|}{\|\nabla f(\mfx_k)\|} \nabla f(\mfx_k)$ in updating $\mfu_{k+1}$. 

\subsubsection{The non-linear CG method}
In this case, we obtain step size $s_k$ by line search techniques and the update rule of search direction $\mfu_{k+1}$ reads
\begin{equation}
\label{CG-upd}
\mfu_{k+1}=\beta^{(CG)}_k \mfu_k-\nabla f(\mfx_k).
\end{equation}
If $\mfu_{k}$ does not have the descent property, i.e., $\la -\nabla f(\mfx_k), \mfu_k\ra<0$, CG will restart by setting $\mfu_{k+1}=-\nabla f(\mfx_k)$. In FIRE, when $\varphi_k=\la -\nabla f(\mfx_k), \mfu_k\ra$ in the restarting criterion is negative, $\mfu_{k+1}$ is reset in \eqref{SDC-rst} as same as CG.  Though the resetting rules are same, the update rules of search direction  can be viewed as different linear combinations of the history search direction and the current gradient. The calculation of $\beta^{(CG)}_k$ is based on $\nabla f(\mfx_k)$ and $\nabla f(\mfx_{k-1})$, while $\beta_k$ and $\gamma_k$ in SDC depend on the restarting criterion. Moreover, as mentioned before, the update rule of $\mfu_k$ with SDC involves an extra term $\gamma_k\frac{\|\mfu_k\|}{\|\nabla f(\mfx_k)\|} \nabla f(\mfx_k)$, which leads to a different combination rule. 

\section{SDC from an ODE perspective}
\label{sec:ode}
In this section, we consider the unconstrained smooth convex optimization problem \eqref{problem} with a unique minimizer $\mfx^*$. Namely, it is assumed that $h=0$, $f\in\mcF_L$ and $f$ is bounded from below. Moreover, we assume that no restarting criterion is triggered in Algorithm \ref{FISC} and the step size $s_k$ is fixed to be $s$. 
\subsection{SDC in continuous time}
By rescaling $\mfv_k=\sqrt{s}\mfu_k$, we can write the update rule of $\mfu_{k+1}$ and $\mfx_{k+1}$ given by \eqref{SDC-upd} and \eqref{x_upd} as follows:
\begin{equation}
\label{SDC_1}
\left\{
\begin{aligned}
&\frac{\mfv_{k+1}-\mfv_k}{\sqrt{s}}=-\frac{\beta_k}{\sqrt{s}}\mfv_k-\frac{\gamma_k}{\sqrt{s}}\frac{\|\mfv_k\|}{\|\nabla f(\mfx_k)\|}\nabla f(\mfx_k)-\nabla f(\mfx_k),\\
&\frac{\mfx_{k+1}-\mfx_k}{\sqrt{s}}=\mfv_{k+1}.
\end{aligned}
\right.
\end{equation}
Taking the limit $s\to 0$ in \eqref{SDC_1} and neglecting higher order terms, we directly have 
\begin{equation}
\label{SDC-ODE-1st}
\left\{
\begin{aligned}
&\dot \mfv=-\nabla f(\mfx)-\beta(t)\mfv+\gamma(t)\frac{||\mfv||}{||\nabla f(\mfx)||}\nabla f(\mfx),\\
&\dot \mfx=\mfv,\\
\end{aligned}
\right.
\end{equation}
where $\beta(t),\gamma(t): \mbR^+\to\mbR^+$ can be viewed as rescaled $\beta_k,\gamma_k$ in continuous time. Specifically, for FIRE, $\beta(t)$ and $\gamma(t)$ have the following expressions:
\begin{equation}
\label{bg_SDC}
\beta(t)=\gamma(t)=c_1e^{-c_2 t},
\end{equation}
where $c_1, c_2>0$ are constants. 

We can rewrite \eqref{SDC-ODE-1st} into a second-order ODE:
\begin{equation}
\label{SDC-ODE}
\ddot \mfx+\nabla f(\mfx)+\beta(t)\dot \mfx+\gamma(t)\frac{||\dot \mfx||}{||\nabla f(\mfx)||}\nabla f(\mfx)=0.
\tag*{(SDC-ODE)}
\end{equation}
Using the symplectic Euler scheme, the discretization of \eqref{SDC-ODE-1st} reads:
\begin{equation}
\label{dis_fo_ode}
\left\{
\begin{aligned}
&\mfv_{k+1}=\mfv_k-\sqrt{s}\nabla f(\mfx_k)-\sqrt{s}\beta(k\sqrt{s})\mfv_k+\sqrt{s}\gamma (k\sqrt{s})\frac{||\mfv_k||}{||\nabla f(\mfx_k)||}\nabla f(\mfx_k),\\
&\mfx_{k+1}=\mfx_k+\sqrt{s}\mfv_{k+1},\\
\end{aligned}
\right.
\end{equation}
where $\sqrt{s}$ is the step size.  By rescaling $\beta_k=\sqrt{s} \beta(k\sqrt{s})$, $\gamma_k=\sqrt{s} \gamma(k\sqrt{s})$ and $\mfu_k=\frac{1}{\sqrt{s}} \mfv_k$, \eqref{dis_fo_ode} is equivalent to the update rule \eqref{SDC_1}. In other word, we use \ref{SDC-ODE} to model these first-order methods with SDC. 

\subsection{FISC-ODE with a $\mcO(1/t^2)$ convergence rate}
The Lyapunov function (energy functional) is a powerful tool to analyze the convergence rate of ODE, as mentioned in \citep{avpoa}, \citep{alaom} and \citep{adefm}. But with $\beta(t),\gamma(t)$ specified by \eqref{bg_SDC}, \ref{SDC-ODE} is hard to be directly analyzed using Lyapunov's methods. We hope to choose proper $\beta_k$ and $\gamma_k$ to ensure that \ref{SDC-ODE} have certain good properties in Lyapunov analysis. Consider the following Lyapunov function for \ref{SDC-ODE}:
\begin{equation}
\label{lya_SDC}
\mcE(t)=\frac{\mu(t)}{2}\|\dot \mfx\|^2+\frac{1}{2}\|\mfx-\mfx^*+\phi(t)\dot \mfx\|^2+\zeta(t)(f(\mfx)-f(\mfx^*)).
\end{equation}
where $\mu(t), \phi(t)$ and $\zeta(t)$ are mappings $\mbR^+\to \mbR^+$ and $\mfx^*$ is the unique minimizer of $f$. The structure of  \eqref{lya_SDC} is motivated by the Lyapunov function in the works of \citet{avpoa} and \citet{drkda}. The Lyapunov function in \citep{avpoa} involves terms $\|\mfx(t)-\mfx^*+\phi(t)\dot \mfx(t)\|^2$ and $(f(\mfx(t))-f(\mfx^*))$ and \cite{drkda} introduces an additional term $\|\dot \mfx(t)\|^2$. 

We consider a specific selection of $ \beta(t), \gamma(t), \mu(t), \phi(t)$ and $\zeta(t)$:
\begin{equation}
\label{bgmpp}
\beta(t)=\frac{r-3}{t},\quad \gamma(t)=\frac{r}{t},\quad \mu(t)=\frac{(r-3)t^2}{2(r-1)^2}, \quad \phi(t)=\frac{t}{r-1}, \quad \zeta(t)=\frac{t^2}{2(r-1)},
\end{equation}
where $r\geq 3$ is a parameter. This renders our proposed \ref{FISC-ODE}:
\begin{equation}
\label{FISC-ODE}
\ddot \mfx+\frac{r}{t}\dot \mfx+\nabla f(\mfx)+\frac{r-3}{t}\frac{\|\dot \mfx\|}{\|\nabla f(\mfx)\|}\nabla f(\mfx)=0. \tag*{(FISC-ODE)}
\end{equation}
For the Lyapunov function of \ref{FISC-ODE} , we have the following lemma. 

\begin{lem}
\label{thm:lya}
With $\mu(t), \phi(t)$ and $\zeta(t)$ specified in \eqref{bgmpp}, the Lyapunov function $\mcE(t)$ satisfies $\dot \mcE(t)\leq0$.
\end{lem}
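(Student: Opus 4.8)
The plan is the standard Lyapunov computation: differentiate $\mcE(t)$ along a solution of \ref{FISC-ODE}, eliminate $\ddot\mfx$ via the ODE, and show what remains is nonpositive using convexity of $f$ and the Cauchy--Schwarz inequality. First I would dispose of a degenerate case: if $\nabla f(\mfx(t_0))=0$ then $\mfx(t_0)=\mfx^*$ by uniqueness of the minimizer and there is nothing to prove, so on the portion of trajectory under consideration we may assume $\nabla f(\mfx(t))\neq 0$, which also makes the term $\frac{\|\dot\mfx\|}{\|\nabla f(\mfx)\|}\nabla f(\mfx)$ well defined. It is convenient to write $\lambda:=\frac{r-3}{t}\frac{\|\dot\mfx\|}{\|\nabla f(\mfx)\|}$, which is $\geq 0$ since $r\geq 3$, so that \ref{FISC-ODE} reads $\ddot\mfx=-\frac{r}{t}\dot\mfx-(1+\lambda)\nabla f(\mfx)$.

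Next I would differentiate $\mcE$ from \eqref{lya_SDC} and substitute the ODE. The derivative of the middle term produces the factor $(1+\dot\phi)\dot\mfx+\phi\ddot\mfx$, and the first structural gain of the prescription \eqref{bgmpp} is that $\phi(t)=\frac{t}{r-1}$ forces $1+\dot\phi-\frac{r\phi}{t}=0$; after substituting the ODE this factor becomes $-(1+\lambda)\phi\nabla f(\mfx)$, and every contribution containing $\la\mfx-\mfx^*,\dot\mfx\ra$ cancels. Collecting the surviving terms, $\dot\mcE$ is a linear combination of $\|\dot\mfx\|^2$, $\la\dot\mfx,\nabla f(\mfx)\ra$, $\la\mfx-\mfx^*,\nabla f(\mfx)\ra$, and $f(\mfx)-f(\mfx^*)$. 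Two further identities built into \eqref{bgmpp} finish the simplification: one checks directly that $\frac{\dot\mu}{2}-\frac{r\mu}{t}=-\frac{(r-3)t}{2(r-1)}$, that $\mu+\phi^2=\zeta$, and that $\dot\zeta=\phi$. The first two reduce the $\|\dot\mfx\|^2$-coefficient to $-\frac{(r-3)t}{2(r-1)}$ and collapse the $\la\dot\mfx,\nabla f(\mfx)\ra$-coefficient to $-\lambda\zeta$, giving
\[
\dot\mcE=-\frac{(r-3)t}{2(r-1)}\|\dot\mfx\|^2-\lambda\zeta\la\dot\mfx,\nabla f(\mfx)\ra-(1+\lambda)\phi\la\mfx-\mfx^*,\nabla f(\mfx)\ra+\dot\zeta\big(f(\mfx)-f(\mfx^*)\big).
\]

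The final step bounds the right-hand side. Convexity of $f$ gives $f(\mfx)-f(\mfx^*)\leq\la\nabla f(\mfx),\mfx-\mfx^*\ra$, and since $\dot\zeta=\phi>0$ this bound may be inserted; together with $\dot\zeta=\phi$ the last two terms merge into $-\lambda\phi\la\mfx-\mfx^*,\nabla f(\mfx)\ra$. For the normalized-gradient term, Cauchy--Schwarz yields $-\lambda\la\dot\mfx,\nabla f(\mfx)\ra\leq\lambda\|\dot\mfx\|\,\|\nabla f(\mfx)\|=\frac{r-3}{t}\|\dot\mfx\|^2$, and multiplying by $\zeta=\frac{t^2}{2(r-1)}$ shows $-\lambda\zeta\la\dot\mfx,\nabla f(\mfx)\ra\leq\frac{(r-3)t}{2(r-1)}\|\dot\mfx\|^2$, which exactly cancels the first term. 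Hence $\dot\mcE\leq-\lambda\phi\la\mfx-\mfx^*,\nabla f(\mfx)\ra\leq 0$, the last inequality because $\lambda\geq0$, $\phi>0$, and, by convexity again, $\la\mfx-\mfx^*,\nabla f(\mfx)\ra\geq f(\mfx)-f(\mfx^*)\geq 0$.

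I do not anticipate a genuine difficulty; the argument is mostly bookkeeping. The two nonroutine points are (i) recognizing that \eqref{bgmpp} is engineered precisely so that $1+\dot\phi=\frac{r\phi}{t}$, $\mu+\phi^2=\zeta$, and $\dot\zeta=\phi$ hold at once, and (ii) absorbing the sign-indefinite normalized-gradient term into the damping term $-\frac{r}{t}\dot\mfx$ via Cauchy--Schwarz --- the one place where $r\geq 3$ is used and where the cancellation is exactly tight. A small preliminary to state is the regularity needed to differentiate $\mcE$ (a $C^2$ trajectory on $t>0$) and the separate treatment of the instant where $\nabla f(\mfx)=0$.
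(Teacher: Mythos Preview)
Your proposal is correct and follows essentially the same route as the paper: differentiate $\mcE$, substitute \ref{FISC-ODE} for $\ddot\mfx$, and bound the remainder using convexity and Cauchy--Schwarz. The paper carries out the identical computation after the substitution $\omega=1/(r-1)$ and arrives at the same three nonpositive pieces; your presentation is slightly cleaner in that you isolate the structural identities $1+\dot\phi=r\phi/t$, $\mu+\phi^2=\zeta$, $\dot\zeta=\phi$ and package the normalized-gradient term into the scalar $\lambda$, whereas the paper expands everything directly, but the mathematical content is the same.
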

\begin{proof}
For simplicity, let $\omega=1/(r-1)$. Then $(r-3)/(r-1)^2=\omega-2\omega^2$, $r=(\omega+1)/\omega$. We can rewrite \ref{FISC-ODE} as:
\begin{equation}\label{omegaddx}
\omega t\ddot \mfx=-(1+\omega)\dot \mfx-\omega t \nabla  f(\mfx)-(1-2\omega)\frac{\|\dot \mfx\|}{\|\nabla f(\mfx)\|}\nabla f(\mfx).
\end{equation}
The convexity of $f$ yields 
\begin{equation}\label{cvx_f}
\la\mfx-\mfx^*, \nabla  f(\mfx)\ra\geq\la\mfx-\mfx^*, \nabla  f(\mfx)\ra -f(\mfx)+f(\mfx^*)\geq0.
\end{equation}
The Lyapunov function \eqref{lya_SDC} with $\mu(t), \phi(t)$ and $\zeta(t)$ specified in \eqref{bgmpp} writes
\begin{equation}
\label{lya_cont}
\mcE(t)=\frac{(\omega-2\omega^2)t^2}{4}\|\dot \mfx\|^2+\frac{1}{2}\left\|\mfx-\mfx^*+\omega t\dot \mfx\right\|^2+\frac{\omega t^2}{2}(f(\mfx)-f(\mfx^*)).
\end{equation}
Hence, we obtain
$$
\begin{aligned}
2\dot \mcE(t)=&(1-2\omega)t\la \dot \mfx, \omega t\ddot \mfx\ra+(\omega-2\omega^2)t\|\dot \mfx\|^2+2\la \mfx-\mfx^*+\omega t\dot \mfx, \dot \mfx+\omega\dot \mfx+\omega t\ddot \mfx\ra\\
&+\omega t^2 \la \dot \mfx, \nabla f(\mfx)\ra+2\omega t (f(\mfx)-f(\mfx^*))\\
=&-(1-2\omega)t\lp(1+\omega)\|\dot \mfx\|^2+\omega t\la \dot \mfx, \nabla  f(\mfx)\ra+(1-2\omega)\frac{\|\dot \mfx\|}{\|\nabla f(\mfx)\|} \la \dot \mfx, \nabla  f(\mfx)\ra\rp\\
&+(\omega-2\omega^2)t\|\dot \mfx\|^2-2(1-2\omega)\frac{\|\dot \mfx\|}{\|\nabla f(\mfx)\|}\la \mfx-\mfx^*+\omega t\dot \mfx, \nabla  f(\mfx)\ra\\
&-2\omega t\la \mfx-\mfx^*+\omega t\dot \mfx, \nabla  f(\mfx)\ra+\omega t^2 \la \dot \mfx, \nabla f(\mfx)\ra+2\omega t (f(\mfx)-f(\mfx^*))\\
=&-(1-2\omega)t\lp\|\dot\mfx\|^2+\frac{\|\dot \mfx\|}{\|\nabla f(\mfx)\|} \la \dot \mfx, \nabla  f(\mfx)\ra\rp\\
&-2(1-2\omega)\frac{\|\dot \mfx\|}{\|\nabla f(\mfx)\|}\la \mfx-\mfx^*, \nabla  f(\mfx)\ra\\
&-2\omega t\lp \la\mfx-\mfx^*, \nabla  f(\mfx)\ra -f(\mfx)+f(\mfx^*)\rp\leq0,
\end{aligned}
$$
where the second equality is due to \eqref{omegaddx} and the last inequality takes \eqref{cvx_f}.
\end{proof}

Based on Lemma \ref{thm:lya}, we have the following convergence rate of \ref{FISC-ODE}.

\begin{theorem}[The $\mcO(t^{-2})$ convergence rate of FISC-ODE]
\label{ot-2}
For any $r\geq3$, let $\mfx(t)$ be the solution to \ref{FISC-ODE} with initial conditions $\mfx(0)=\mfx_0$ and $\dot \mfx(0)=0$. Then, for $t>0$, we have
$$
f(\mfx(t))-f(\mfx^*)\leq\frac{(r-1)\|\mfx_0-\mfx^*\|^2}{t^2}.
$$
\end{theorem}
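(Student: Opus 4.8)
The plan is to integrate the differential inequality supplied by Lemma~\ref{thm:lya}. Since $\dot\mcE(t)\le0$ on $(0,\infty)$, the Lyapunov function is non-increasing, so $\mcE(t)\le\lim_{\tau\to0^+}\mcE(\tau)$ for every $t>0$. Reading off the explicit form \eqref{lya_cont}, the coefficients $\mu(t),\phi(t),\zeta(t)$ all vanish at $t=0$ and $\dot\mfx(0)=0$, so this limit is exactly $\tfrac12\|\mfx_0-\mfx^*\|^2$. I then drop the two manifestly nonnegative summands $\tfrac{(\omega-2\omega^2)t^2}{4}\|\dot\mfx\|^2$ and $\tfrac12\|\mfx-\mfx^*+\omega t\dot\mfx\|^2$ — here $\omega-2\omega^2=(r-3)/(r-1)^2\ge0$ for $r\ge3$ — and keep only $\zeta(t)(f(\mfx)-f(\mfx^*))=\tfrac{t^2}{2(r-1)}(f(\mfx(t))-f(\mfx^*))$, which is itself $\ge0$ because $\mfx^*$ minimizes $f$. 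This yields $\tfrac{t^2}{2(r-1)}(f(\mfx(t))-f(\mfx^*))\le\mcE(t)\le\tfrac12\|\mfx_0-\mfx^*\|^2$, and multiplying through by $2(r-1)/t^2$ gives the stated bound.

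The only real work is making the first two sentences rigorous, since \ref{FISC-ODE} has the singular coefficient $r/t$, so neither the existence of $\mfx(t)$ near $t=0$ nor the validity of $\dot\mcE\le0$ at the endpoint is automatic from the Lemma alone. I would argue that $\mcE$ extends continuously to $[0,\infty)$: each of $\mu,\phi,\zeta$ is continuous and vanishes at $0$, and $\|\dot\mfx\|$ is bounded near $0$ (indeed $\dot\mfx(0)=0$), so $\lim_{t\to0^+}\mcE(t)=\tfrac12\|\mfx_0-\mfx^*\|^2$. Combining this with $\dot\mcE\le0$ on $(0,\infty)$ from Lemma~\ref{thm:lya} and applying the fundamental theorem of calculus on $[\epsilon,t]$, then letting $\epsilon\to0^+$, gives $\mcE(t)\le\mcE(0)$ as needed.

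One further technical wrinkle is the normalized term $\tfrac{\|\dot\mfx\|}{\|\nabla f(\mfx)\|}\nabla f(\mfx)$, which is defined only where $\nabla f(\mfx(t))\neq0$. I would dispose of this by noting that if $\nabla f(\mfx(t_0))=0$ for some $t_0>0$ then $\mfx(t_0)=\mfx^*$ by uniqueness of the minimizer, so $f(\mfx(t_0))-f(\mfx^*)=0$ and the claimed inequality holds trivially there; on the complementary (open) set the computation in the proof of Lemma~\ref{thm:lya} applies verbatim, and continuity of $f(\mfx(\cdot))$ propagates the bound to the boundary. I expect the main obstacle to be precisely this endpoint behavior at $t=0$ — justifying the continuity of $\mcE$ and the passage $\mcE(t)\le\mcE(0^+)$ in the presence of the singular drift — rather than the closing rearrangement, which is immediate once the Lyapunov inequality is available.
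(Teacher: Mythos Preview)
Your proposal is correct and follows essentially the same route as the paper: use Lemma~\ref{thm:lya} to conclude $\mcE(t)\le\mcE(0)=\tfrac12\|\mfx_0-\mfx^*\|^2$, then lower-bound $\mcE(t)$ by its $\zeta(t)(f(\mfx(t))-f(\mfx^*))$ summand and rearrange. Your additional care about the singular endpoint $t=0$ and the case $\nabla f(\mfx(t))=0$ goes beyond what the paper writes (it simply asserts $\mcE(0)=\tfrac12\|\mfx_0-\mfx^*\|^2$ without further comment), but this extra rigor is appropriate and does not alter the argument.
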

\begin{proof}
From Lemma \ref{thm:lya}, $\mcE(t)$ is non-increasing and by \eqref{lya_SDC}
$$
\mcE(t)\geq \frac{t^2(f(\mfx(t))-f(\mfx^*))}{2(r-1)}
$$
Hence, we obtain
$$
f(\mfx(t))-f(\mfx^*)\leq\frac{2(r-1)\mcE(t)}{t^2}\leq\frac{2(r-1)\mcE(0)}{t^2}=\frac{(r-1)\|\mfx_0-\mfx^*\|^2}{t^2}=O(t^{-2}),
$$
which completes the proof.
\end{proof}

Now, rewriting \ref{FISC-ODE} into a first-order ODE system and discretizing it with the symplectic Euler scheme, we can directly recover the update rule of FISC \eqref{FISC-upd} with $l_k=k$. We can also discretize \ref{FISC-ODE} with techniques analogous to the Nesterov's accelerated method, and then the update rule of FISC-ns \eqref{FISC-ns-upd} is recovered.  

\subsection{Comparison with other first-order methods with ODE interpretations}

If we take $r=3$, then \ref{FISC-ODE} turns to be
\begin{equation}
\label{Nesterov-ODE}
\ddot \mfx+\frac{3}{t}\dot \mfx+\nabla f(\mfx)=0. \tag*{(Nesterov-ODE)}
\end{equation}
\citet{adefm} used this ODE for modeling the Nesterov's accelerated method.

Dropping the term $(r-3)\|\dot \mfx\|\nabla f(\mfx)/(t\|\nabla f(\mfx)\|)$, \ref{FISC-ODE} becomes:
\begin{equation}
\label{HF-ns-ODE}
\ddot \mfx+\frac{r}{t}\dot \mfx+\nabla f(\mfx)=0, \tag*{(HF-ns-ODE)}
\end{equation}
which is the high friction version of \ref{Nesterov-ODE} in \citep{adefm} with $r\geq 3$. 

Under the special case $r=3$, the coefficient of the term $\frac{||\mfu_k||}{||\nabla f(\mfx_k)||}\nabla f(\mfx_k)$ in \eqref{FISC-ns-upd} turns to be $0$. If no restarting criterion is met and the step size is fixed, FISC-ns becomes the Nesterov's accelerated method. With restarts and a fixed step size, FISC-ns recovers the Nesterov's accelerated method with gradient restarting \citep{arfag}. Therefore, we can view FISC-ns as an extension of the restarting Nesterov's accelerated method. Furthermore,  numerical experiments indicate that a proper choice of $r$  leads to extra acceleration in the Nesterov's accelerated method.

We also observe that the ODE modeling the heavy-ball method is given by:
\begin{equation}
\label{HB-ODE}
\ddot \mfx+\beta\dot \mfx+\nabla f(\mfx)=0, \tag*{(HB-ODE)}
\end{equation}
where $\beta$ is a constant. The convergence rate of \ref{HB-ODE} is an open problem for the general convex $f$. 

In summary, \ref{Nesterov-ODE}, \ref{HF-ns-ODE} and \ref{HB-ODE} can be viewed as specific examples of \ref{SDC-ODE} with different choices of $\beta(t)$ and $\gamma(t)$. 

\section{Convergence analysis}
\label{sec:converge}
In this section, we analyze the global convergence of methods with SDC for general unconstrained smooth optimization problems and the convergence of FISC-PM for composite optimization problems. In both cases, we assume that the target function $f$ is bounded from below. 
\subsection{The global convergence of methods with SDC}
\label{ssec:glo}
In this subsection, we show the global convergence of methods with SDC and explain why we use \eqref{rst_crit} as our restarting criterion. We consider the case where the objective function is smooth, i.e., $h=0$ in \eqref{problem}. Define the level set
$$
\mcL=\{\mfx\in\mbR^n: f(\mfx)\leq f(\mfx_0)\}.
$$
Let $\bar \mcL$ be the collection of $\mfx\in\mbR^n$ whose distance to $\mcL$ is at most $\mu d_{max}$, where $d_{max}=\sup_k\|\nabla f(\mfx_k)\|$ and $\mu$ is a parameter. $f$ is assumed to be $L$-smooth on $\bar \mcL$. We begin with the following lemma:
\begin{lem}
\label{prop:stp}
Suppose that $f$ is differentiable. $\mfu_{k+1}$ is updated by \eqref{SDC-upd} or \eqref{SDC-rst} depending on the restarting criterion using $\varphi_k$, and $\mfx_{k+1}$ is updated by \eqref{x_upd}. $\beta_k$ and $\gamma_k$ satisfy \eqref{bg_cond}, and the step size is obtained by the nonmonotone line search. Then, for any integer $k\geq0$, we have 
\begin{equation}
\label{dir_ass1}
\la \mfu_{k+1}, -\nabla f(\mfx_k)\ra\geq \|\nabla f(\mfx_k)\|^2.
\end{equation}
\end{lem}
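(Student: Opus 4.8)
The plan is a direct case analysis on whether the restarting criterion is triggered in the $(k+1)$-th iteration, together with a one-line expansion of the inner product in each case. Note first that the conclusion \eqref{dir_ass1} only concerns the direction $\mfu_{k+1}$ and the gradient at $\mfx_k$, so the hypothesis on the step size (nonmonotone line search) is not actually needed here; it is carried along only because it frames the rest of Subsection \ref{ssec:glo}.

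First I would dispose of the easy case. If $\varphi_k<0$ (or if $k=0$, where $\mfu_0=0$ forces $\mfu_1=-\nabla f(\mfx_0)$), then by \eqref{SDC-rst} we have $\mfu_{k+1}=-\nabla f(\mfx_k)$, and hence
$$
\la \mfu_{k+1}, -\nabla f(\mfx_k)\ra = \|\nabla f(\mfx_k)\|^2,
$$
so \eqref{dir_ass1} holds with equality.

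Next, suppose $\varphi_k\ge 0$, i.e.\ $\la -\nabla f(\mfx_k), \mfu_k\ra\ge 0$, so that $\mfu_{k+1}$ is given by \eqref{SDC-upd}. Taking the inner product of \eqref{SDC-upd} with $-\nabla f(\mfx_k)$ and expanding term by term gives
$$
\la \mfu_{k+1}, -\nabla f(\mfx_k)\ra = (1-\beta_k)\,\la \mfu_k, -\nabla f(\mfx_k)\ra + \gamma_k\,\|\mfu_k\|\,\|\nabla f(\mfx_k)\| + \|\nabla f(\mfx_k)\|^2.
$$
Now I would argue that the first two summands are nonnegative: $1-\beta_k\ge 0$ by \eqref{bg_cond} and $\la \mfu_k, -\nabla f(\mfx_k)\ra=\varphi_k\ge 0$ by the case assumption, so the first term is $\ge 0$; and $\gamma_k\ge 0$ by \eqref{bg_cond} together with $\|\mfu_k\|\,\|\nabla f(\mfx_k)\|\ge 0$ makes the second term $\ge 0$. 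Dropping both of these leaves $\la \mfu_{k+1}, -\nabla f(\mfx_k)\ra\ge \|\nabla f(\mfx_k)\|^2$, which is \eqref{dir_ass1}.

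There is no real obstacle in this argument — it is an elementary computation once the cases are separated. The only point requiring a little care is to make sure the $k=0$ boundary case is folded into the ``restart'' branch correctly (since $\beta_0,\gamma_0$ are unspecified there), and to remember that the normalization factor $\|\mfu_k\|/\|\nabla f(\mfx_k)\|$ in \eqref{SDC-upd} exactly cancels one power of $\|\nabla f(\mfx_k)\|$ when paired with $-\nabla f(\mfx_k)$, leaving the clean product $\|\mfu_k\|\,\|\nabla f(\mfx_k)\|$.
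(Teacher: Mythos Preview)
Your proposal is correct and follows essentially the same approach as the paper: a two-case split on $\varphi_k$, with the restart case giving equality and the non-restart case handled by expanding the inner product of \eqref{SDC-upd} with $-\nabla f(\mfx_k)$ and discarding the two nonnegative terms. Your additional remarks about the $k=0$ boundary case and the irrelevance of the line-search hypothesis to \eqref{dir_ass1} are accurate observations, but the core argument matches the paper's exactly.
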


\begin{proof}
If $\varphi_k\geq0$, then we update $\mfu_{k+1}$ by \eqref{SDC-upd}. Hence,
$$
\begin{aligned}
&\la \mfu_{k+1}, -\nabla f(\mfx_k)\ra \\
= &(1-\beta_k)\la \mfu_k, -\nabla f(\mfx_k)\ra + \gamma_k \|\mfu_k\|\|\nabla f(\mfx_k)\|+\|\nabla f(\mfx_k)\|^2\geq \|\nabla f(\mfx_k)\|^2.
\end{aligned}
$$
If $\varphi_k<0$, we reset $\mfu_{k+1}=-\nabla f(\mfx_k)$ and $\la \mfu_{k+1}, -\nabla f(\mfx_k)\ra=\|\nabla f(\mfx_k)\|^2$. 
\end{proof}

Let $\theta_k$ be the angle between the search direction $\mfu_{k+1}$ and the negative gradient direction $-\nabla f(\mfx_k)$, i.e.,
$$
\theta_k=\arccos\frac{\la \mfu_{k+1}, -\nabla f(\mfx_k)\ra}{\|\mfu_{k+1}\|\|\nabla f(\mfx_k)\|}.
$$
According to \eqref{prop:stp}, we have a lower bound for $\cos\theta_k$:
\begin{equation}
\label{theta_bnd}
\cos\theta_k=\frac{\la \mfu_{k+1}, -\nabla f(\mfx_k)\ra}{\|\mfu_{k+1}\|\|\nabla f(\mfx_k)\|}\geq\frac{\|\nabla f(\mfx_k)\|}{\|\mfu_{k+1}\|}.
\end{equation}
Hence, $\la \mfu_{k+1}, -\nabla f(\mfx_k)\ra>0$ for each $k$. From our assumption that $f$ is bounded from below, there exists $s_k$ satisfying the Armijo conditions \eqref{armijo} or the nonmonotone Armijo conditions \eqref{equ:nonbls}, according to Lemma 1.1 in \citep{anlst}.

We add two restarting criteria:
\begin{gather}
\label{non_dec_grad}
d_f\|\nabla f(\mfx_{k})\|\geq \|\nabla f(\mfx_{k-1})\|,\\
\label{at_most_k}
n_k\leq K,
\end{gather}
where $K\in\mathbb{N}$, $d_f>1$ and $n_k$ is the number of iterations since the last restart. Namely, we restart our system if at least one of the criteria \eqref{rst_crit}, \eqref{non_dec_grad} and \eqref{at_most_k} is violated. If we set $d_f$ and $K$ large enough in practice, criteria \eqref{non_dec_grad} and \eqref{at_most_k} will seldom be violated. Equipped restarting criteria \eqref{non_dec_grad} and \eqref{at_most_k}, the system will restart at least once in $K$ consecutive iterations and $\|\nabla f(\mfx_{k})\|$ will not drop too rapidly. We then introduce the following lemma.

\begin{lem}
\label{upper_bound}
Suppose that the conditions of Lemma \ref{prop:stp} are satisfied. $K'<K$ is an integer and both \eqref{rst_crit} and \eqref{non_dec_grad} hold for $0\leq k\leq K'$. Then,  $\frac{\|\mfu_{k+1}\|}{\|\nabla f(\mfx_{k})\|}$ is upper bounded for all $0\leq k\leq K'$.
\end{lem}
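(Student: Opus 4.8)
The plan is to show that the ratio $R_k := \|\mfu_{k+1}\|/\|\nabla f(\mfx_k)\|$ obeys a simple linear recursion whose solution is bounded by a constant depending only on $d_f$ and $K$. First I would check that, under the stated hypotheses, no restart is triggered for $0\le k\le K'$: \eqref{rst_crit} and \eqref{non_dec_grad} hold by assumption, and \eqref{at_most_k} holds because there has been no restart for indices $0,\dots,k$ and $k+1\le K'+1\le K$. Hence $\mfu_{k+1}$ is always produced by \eqref{SDC-upd} on this range. The base case is immediate: plugging $\mfu_0=0$ into \eqref{SDC-upd} with $k=0$ gives $\mfu_1=-\nabla f(\mfx_0)$, so $R_0=1$ (here and throughout we use the tacit assumption $\nabla f(\mfx_k)\neq 0$, since otherwise a stationary point has already been reached and there is nothing to prove).

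Next I would bound $\|\mfu_{k+1}\|$ for $1\le k\le K'$. Applying the triangle inequality to \eqref{SDC-upd} and using $0\le\beta_k\le1$, $0\le\gamma_k\le1$ from \eqref{bg_cond},
$$\|\mfu_{k+1}\|\le(1-\beta_k+\gamma_k)\|\mfu_k\|+\|\nabla f(\mfx_k)\|\le 2\|\mfu_k\|+\|\nabla f(\mfx_k)\|.$$
Dividing by $\|\nabla f(\mfx_k)\|$ and writing $\|\mfu_k\|=R_{k-1}\|\nabla f(\mfx_{k-1})\|$, this becomes $R_k\le 2R_{k-1}\,\|\nabla f(\mfx_{k-1})\|/\|\nabla f(\mfx_k)\|+1$. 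At this point the criterion \eqref{non_dec_grad} is exactly what is needed: it gives $\|\nabla f(\mfx_{k-1})\|\le d_f\|\nabla f(\mfx_k)\|$, hence $R_k\le 2d_f R_{k-1}+1$.

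Finally I would solve this recursion. With $R_0=1$ and $d_f>1$ (so $2d_f>1$), an induction gives $R_k\le\sum_{j=0}^{k}(2d_f)^j=\frac{(2d_f)^{k+1}-1}{2d_f-1}$; since $k\le K'\le K-1$, the right-hand side is at most the constant $\frac{(2d_f)^{K}-1}{2d_f-1}$, which is independent of $k$ and of the particular trajectory, proving the claim. There is no deep obstacle — the estimate is elementary — but the conceptual point worth flagging is that, because $\gamma_k$ may exceed $\beta_k$, the coefficient $1-\beta_k+\gamma_k$ need not be at most $1$, so one cannot expect a contraction and the naive bound on $R_k$ grows geometrically in $k$. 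This is precisely why the auxiliary restarting criteria \eqref{non_dec_grad} (limiting how fast $\|\nabla f(\mfx_k)\|$ can shrink) and \eqref{at_most_k} (capping the number of steps between restarts) are imposed: together they convert the geometric growth into a finite, instance-independent constant, which is all that the subsequent convergence argument requires.
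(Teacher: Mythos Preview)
Your proof is correct and in fact cleaner than the paper's. The paper expands $\|\mfu_{k+1}\|^2$ exactly, applies Cauchy--Schwarz to the cross term, and arrives at a quadratic recursion of the form $\lambda_k^2\le 4d_f^2\lambda_{k-1}^2+4d_f\lambda_{k-1}+2$; it then compares against an auxiliary sequence $\{\tilde\lambda_k\}$ satisfying this with equality and observes $\tilde\lambda_k$ is increasing, so $\lambda_k\le\tilde\lambda_K$. You instead bound $\|\mfu_{k+1}\|$ directly by the triangle inequality, obtain the linear recursion $R_k\le 2d_fR_{k-1}+1$, and solve it in closed form as a geometric sum. The two approaches yield bounds of the same order (both geometric in $K$ with base $\sim 2d_f$), but yours avoids the auxiliary-sequence comparison and gives an explicit constant $\bigl((2d_f)^K-1\bigr)/(2d_f-1)$, which is arguably more informative. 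The one thing the paper's exact-square computation could in principle buy is a sharper constant, but as written it does not exploit that; so your argument loses nothing and is more elementary.
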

\begin{proof}
Let $\lambda_k=\frac{\|\mfu_{k+1}\|}{\|\nabla f(\mfx_{k})\|}, \xi_k=\frac{\|\mfu_{k}\|}{\|\nabla f(\mfx_{k})\|}$. Specifically, $\lambda_{0}=1$. Based on \eqref{non_dec_grad}, we have $\xi_k\leq d_f\lambda_{k-1}$. Hence,
$$
\begin{aligned}
\lambda_k^2=&\frac{1}{\|\nabla f(\mfx_k)\|^2}\left[(1-\beta_k)^2\|\mfu_k\|^2+\left(1+\gamma_k \frac{\|\mfu_k\|}{\|\nabla f(\mfx_k)\|}\right)^2\|\nabla f(\mfx_k)\|^2\right.\\
&\left.+2(1-\beta_k)\left(1+\gamma_k \frac{\|\mfu_k\|}{\|\nabla f(\mfx_k)\|}\right)\la\mfu_k,-\nabla f(\mfx_k)\ra\right]\\
\leq&(1-\beta_k)^2\xi_k^2+(1+\gamma_k \xi_k)^2+2(1-\beta_k)(1+\gamma_k\xi_k)\xi_k\\
\leq& (1-\beta_k)^2d_f^2\lambda_{k-1}^2+(1+\gamma_k d_f\lambda_{k-1})^2+2(1-\beta_k)(1+\gamma_k d_f\lambda_{k-1})d_f\lambda_{k-1}\\
\leq&d_f^2\lambda_{k-1}^2+2(1+d_f\lambda_{k-1})^2=4d_f^2\lambda_{k-1}^2+4d_f\lambda_{k-1}+2.
\end{aligned}
$$
Consider a sequence $\{\tilde \lambda_k\}_{k=0}$ satisfying $\tilde\lambda_k^2=4d_f^2\tilde\lambda_{k-1}^2+4d_f\tilde\lambda_{k-1}+2$ and $\tilde \lambda_{0}=1$. Because $d_f>1$, it is obvious that $\tilde \lambda_k$ is increasing with respect to $k$. Then,
$$
\lambda_k\leq\tilde\lambda_{k}\leq \tilde \lambda_{K'}\leq \tilde \lambda_{K}, \quad 0\leq k\leq K',
$$
which concluded the proof.
\end{proof}

Lemma \ref{prop:stp} and \ref{upper_bound} guarantee that the direction assumption in \citep{anlst} holds. Namely, there exist positive constants $c_1$ and $c_2$ such that
\begin{equation}
\label{dir_ass}
\la \mfu_{k+1}, \nabla f(\mfx_k)\ra\leq -c_1\|\nabla f(\mfx_k)\|^2, \quad \|\mfu_{k+1}\|\leq c_2 \|\nabla f(\mfx_k)\|.
\end{equation}

Consider the sequence $\{\mfx_k\}$ given by Algorithm \ref{FISC} with extra restarting criteria \eqref{non_dec_grad} and \eqref{at_most_k}. We further assume that the step size $s_k$ is attained by the nonmonotone line search. Note that $f(\mfx)$ is bounded from below, the direction assumption \eqref{dir_ass} holds and the step sizes satisfy the nonmonotone Armijo conditions. According to Theorem 2.2 in \citep{anlst}, we obtain
$$
\lim\limits_{k\to\infty}\inf\|\nabla f(\mfx_k)\|=0.
$$
Moreover, if $\eta_{max}<1$ ($\eta_{max}$ is a parameter for the nonmonotone line search), then we have
$$
\lim\limits_{k\to\infty}\|\nabla f(\mfx_k)\|=0,
$$
which indicates the global convergence of first-order methods with SDC.

\subsection{The $\mcO(1/k^2)$ convergence rate of FISC-PM}  We analyze the convergence of FISC-PM for the composite optimization problem \eqref{problem} with a unique minimizer $\mfx^*$. It is assumed that $f\in \mcF_L$ is bounded from below. We consider the case that the step size is fixed to be $0<s\leq 1/L$ and no restarts are used, i.e., the sequences $\{\mfx_k\}$ and $\{\mfy_k\}$ are merely updated by \eqref{SDC-PM-upd} and \eqref{yy_upd}. $\beta_k, \gamma_k$ are specified by \eqref{FISC-para} with $l_k=k$. We introduce the following discrete Lyapunov function $\mathcal{E}(k)$:

\begin{eqt}
\label{lya_dis}
\mathcal{E}(k) = &2\left\|\mfx_k - \mfx^* + \frac{k-1}{r-1}(\mfx_k - \mfx_{k-1})\right\|^2 + \frac{2(k+r-2)^2s}{r-1}(f(\mfx_k) - f(\mfx^*)) \\
&+ \frac{(r-3)(k-1)^2}{(r-1)^2}\|\mfx_k - \mfx_{k-1}\|^2.
\end{eqt}
The function $\mathcal{E}(k)$ can be viewed as the discrete version of \eqref{lya_cont} by multiplying $4$. We introduce a basic inequality in convex optimization:
\begin{lem}
Consider a convex function of the form $f(\mfx)=\varphi(\mfx)+h(\mfx)$, where $\varphi\in \mcF_L$ and $h$ is convex. For any $0<s\le 1/L$ and $\mfx,\mfy\in\mbR^n$, we have
\begin{equation}
\label{basic_ineq}
f(\mfy - sG_s(\mfy))\le f(\mfx) + G_s(\mfy)^T(\mfy - \mfx) - \frac{s}{2}\|G_s(\mfy)\|^2.
\end{equation}
\end{lem}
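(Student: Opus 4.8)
The plan is to follow the by-now classical one-step analysis of proximal gradient descent (cf.\ \citep{FISTA}). Write $\mfz=\mfy-sG_s(\mfy)=\mathrm{prox}^s_h(\mfy-s\nabla\varphi(\mfy))$, so that $G_s(\mfy)=(\mfy-\mfz)/s$, and observe that the left-hand side of \eqref{basic_ineq} is just $f(\mfz)=\varphi(\mfz)+h(\mfz)$; the goal is to bound $\varphi(\mfz)$ and $h(\mfz)$ separately by quantities evaluated at $\mfx$ and $\mfy$.

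First I would apply the standard descent lemma to the smooth part: since $\varphi\in\mcF_L$ has an $L$-Lipschitz gradient, $\varphi(\mfz)\le\varphi(\mfy)+\la\nabla\varphi(\mfy),\mfz-\mfy\ra+\frac{L}{2}\|\mfz-\mfy\|^2$. Substituting $\mfz-\mfy=-sG_s(\mfy)$ and using the hypothesis $Ls\le1$ to replace $\frac{L}{2}s^2$ by $\frac{s}{2}$ gives
$$
\varphi(\mfz)\le\varphi(\mfy)-s\la\nabla\varphi(\mfy),G_s(\mfy)\ra+\frac{s}{2}\|G_s(\mfy)\|^2 .
$$
This is the only place the assumption $s\le1/L$ enters, and it is precisely what makes the step-size-squared term collapse. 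Next I would invoke the first-order optimality condition of the proximal subproblem defining $\mfz$: since $\mfz$ minimizes $\frac{1}{2s}\|\,\cdot\,-(\mfy-s\nabla\varphi(\mfy))\|^2+h(\,\cdot\,)$, we have $\frac{1}{s}(\mfy-\mfz)-\nabla\varphi(\mfy)\in\partial h(\mfz)$, i.e.\ the vector $\xi:=G_s(\mfy)-\nabla\varphi(\mfy)$ is a subgradient of $h$ at $\mfz$.

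Then, for the fixed but arbitrary $\mfx$, I combine three convexity inequalities: the gradient inequality for $\varphi$, $\varphi(\mfy)\le\varphi(\mfx)+\la\nabla\varphi(\mfy),\mfy-\mfx\ra$; the subgradient inequality for $h$ at $\mfz$, $h(\mfz)\le h(\mfx)+\la\xi,\mfz-\mfx\ra$; and the bound on $\varphi(\mfz)$ above. Adding them to form $f(\mfz)=\varphi(\mfz)+h(\mfz)$ on the left and $f(\mfx)=\varphi(\mfx)+h(\mfx)$ on the right, I then expand $\mfz-\mfx=(\mfy-\mfx)-sG_s(\mfy)$ and $\xi=G_s(\mfy)-\nabla\varphi(\mfy)$ and collect terms. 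Every term carrying $\nabla\varphi(\mfy)$ cancels in pairs, and the surviving quadratic contributions are $\frac{s}{2}\|G_s(\mfy)\|^2$ (from the descent lemma) together with $-s\|G_s(\mfy)\|^2$ (from $\la\xi,-sG_s(\mfy)\ra$), leaving $-\frac{s}{2}\|G_s(\mfy)\|^2$; what remains of the linear terms is exactly $\la G_s(\mfy),\mfy-\mfx\ra$, which is \eqref{basic_ineq}.

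The argument is entirely elementary, so the main obstacle is really just the bookkeeping in this last step: one must track that the subgradient appearing in the optimality condition is exactly $G_s(\mfy)-\nabla\varphi(\mfy)$, so that the cross term $\la\xi,-sG_s(\mfy)\ra=-s\|G_s(\mfy)\|^2+s\la\nabla\varphi(\mfy),G_s(\mfy)\ra$ both cancels the leftover inner product $-s\la\nabla\varphi(\mfy),G_s(\mfy)\ra$ from the descent lemma and supplies the extra $-\frac{s}{2}\|G_s(\mfy)\|^2$ needed to turn the coefficient $+\frac{s}{2}$ into $-\frac{s}{2}$.
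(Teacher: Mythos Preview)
Your proof is correct and is the standard argument (essentially the Beck--Teboulle one-step estimate). Note, however, that the paper does not actually supply a proof of this lemma: it merely introduces \eqref{basic_ineq} as ``a basic inequality in convex optimization'' and then invokes it in the proof of the next lemma. So there is nothing to compare against; you have simply filled in a result the paper takes as known.
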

Based on the basic inequality \eqref{basic_ineq}, we give the following Lemma \ref{lemma:lya}.

\begin{lem}
\label{lemma:lya}
The discrete Lyapunov function $\mcE(k)$ given by \eqref{lya_dis} satisfies 
\begin{equation}
\label{estamate_lya}
\mcE(k)-\mcE(k-1)\leq\alpha(\phi_{k-1}-2)\|\Delta \mfx_{k-1}\|^2-\alpha\phi_k\|\Delta \mfx_k\|^2-\frac{2s}{r-1}(f(\mfx_{k-1})-f(\mfx^*)),
\end{equation}
where 
\begin{equation}
\label{notations}
\alpha=\frac{r-3}{r-1},\, \phi_k=2k+r-3,\, \Delta \mfx_k = \mfx_k-\mfx_{k-1}.
\end{equation}
\end{lem}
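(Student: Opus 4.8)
The plan is to carry out, in discrete time, the Lyapunov computation of Lemma~\ref{thm:lya}, with the proximal gradient $G_s$ replacing $\nabla f$ and the basic inequality \eqref{basic_ineq} playing the role of convexity. Fix the transition $k-1\to k$, write $v_j=f(\mfx_j)-f(\mfx^*)\ge 0$, let $\mfg=G_s(\mfy_{k-1})$ so that $s\mfg=\mfy_{k-1}-\mfx_k$, and let $\mfw$ denote the correction vector $\gamma_{k-1}\,\|\Delta\mfx_{k-1}\|\,G_s(\mfx_{k-1})/\|G_s(\mfx_{k-1})\|$ occurring in \eqref{SDC-PM-upd}, so that $\mfw$ is a nonnegative multiple of $G_s(\mfx_{k-1})$ and $\|\mfw\|=\frac{r-3}{k+r-2}\|\Delta\mfx_{k-1}\|$. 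Put $\mfz_j=\mfx_j-\mfx^*+\frac{j-1}{r-1}\Delta\mfx_j$, so that $\mcE(k)=2\|\mfz_k\|^2+\frac{2s(k+r-2)^2}{r-1}v_k+\frac{(r-3)(k-1)^2}{(r-1)^2}\|\Delta\mfx_k\|^2$.

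I would begin with the algebraic identity that drives the scheme: substituting the update $\Delta\mfx_k=\frac{k-2}{k+r-2}\Delta\mfx_{k-1}-\mfw-s\mfg$ into $\mfz_k-\mfz_{k-1}=\frac{k+r-2}{r-1}\Delta\mfx_k-\frac{k-2}{r-1}\Delta\mfx_{k-1}$, the $\Delta\mfx_{k-1}$ terms cancel and one is left with $\mfz_k=\mfz_{k-1}-\frac{k+r-2}{r-1}(s\mfg+\mfw)$; this cancellation is exactly what pins down the momentum coefficient $\frac{l_k-1}{l_k-1+r}$ in \eqref{SDC-PM-upd}. Then I expand $2\|\mfz_k\|^2-2\|\mfz_{k-1}\|^2=-\frac{4(k+r-2)}{r-1}\la\mfz_{k-1},s\mfg+\mfw\ra+\frac{2(k+r-2)^2}{(r-1)^2}\|s\mfg+\mfw\|^2$, decompose $\mfz_{k-1}=(\mfy_{k-1}-\mfx^*)+\frac{k-1}{r-1}(\mfy_{k-1}-\mfx_{k-1})+\frac{k+r-2}{r-1}\mfw$, and apply \eqref{basic_ineq} at $\mfy_{k-1}$ relative to $\mfx^*$ and to $\mfx_{k-1}$ to replace $\la\mfy_{k-1}-\mfx^*,\mfg\ra$ and $\la\mfy_{k-1}-\mfx_{k-1},\mfg\ra$ by the lower bounds $v_k+\frac s2\|\mfg\|^2$ and $v_k-v_{k-1}+\frac s2\|\mfg\|^2$. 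At this point the $\|\mfg\|^2$ and $\la\mfg,\mfw\ra$ contributions coming from the square $\|s\mfg+\mfw\|^2$ cancel exactly against those produced by the two inequalities.

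After this, the coefficient of $v_k$ in the resulting bound for $\mcE(k)-\mcE(k-1)$ is the nonnegative quantity $c:=\frac{2s(r-3)(k+r-2)^2}{(r-1)^2}$, which cannot simply be discarded; I would absorb it by invoking \eqref{basic_ineq} at $\mfy_{k-1}$ relative to $\mfx_{k-1}$ once more, with multiplier $c$. A short computation (cleanest after setting $m=k+r-2$) then shows that the coefficient of $v_{k-1}$ collapses to exactly $-\frac{2s}{r-1}$, while this third use contributes the genuinely negative term $-\frac c2 s\|\mfg\|^2$; for $r=3$ one has $c=0$ and the argument degenerates to the FISTA estimate. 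What remains is a purely quadratic expression: eliminate $\mfg$ through $s\mfg=\frac{k-2}{k+r-2}\Delta\mfx_{k-1}-\mfw-\Delta\mfx_k$, control $\la\Delta\mfx_{k-1},\mfw\ra$ and $\la\Delta\mfx_k,\mfw\ra$ by Cauchy--Schwarz with $\|\mfw\|=\frac{r-3}{k+r-2}\|\Delta\mfx_{k-1}\|$, and use $\la\mfx_{k-1}-\mfx^*,\mfw\ra\ge 0$, which follows from \eqref{basic_ineq} at $\mfx_{k-1}$ together with $f(\mfx_{k-1}-sG_s(\mfx_{k-1}))\ge f(\mfx^*)$. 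One then checks that the $\la\Delta\mfx_k,\mfw\ra$ and $\la\Delta\mfx_k,\Delta\mfx_{k-1}\ra$ cross terms cancel, that the coefficient of $\|\Delta\mfx_k\|^2$ is $\frac{r-3}{(r-1)^2}((k-1)^2-(k+r-2)^2)=-\alpha\phi_k$, and that the remaining coefficient of $\|\Delta\mfx_{k-1}\|^2$ telescopes to $\frac{r-3}{(r-1)^2}(4(k-2)+2(r-3)(k-1)+(r-3)^2)=\alpha(\phi_{k-1}-2)$, which is exactly \eqref{estamate_lya}.

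The main obstacle is the bookkeeping for the non-smooth correction $\mfw$. Unlike in Lemma~\ref{thm:lya}, where the term $\frac{\|\dot\mfx\|}{\|\nabla f\|}\nabla f$ enters only through inner products whose signs are handled directly by convexity and Cauchy--Schwarz, here $\mfw$ has an uncontrolled direction, so every occurrence of $\mfw$ must be routed either through $\|\mfw\|=\frac{r-3}{k+r-2}\|\Delta\mfx_{k-1}\|$ or through $\la\mfx_{k-1}-\mfx^*,\mfw\ra\ge 0$, and one must verify that all the $(r-3)$-weighted pieces so generated, several of which carry powers $(r-3)^2$ and $(r-3)^3$, sum exactly to the claimed $\alpha(\phi_{k-1}-2)\|\Delta\mfx_{k-1}\|^2$ budget. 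Choosing the three multipliers for \eqref{basic_ineq} so that $v_k$ vanishes and $v_{k-1}$ lands precisely on $-\frac{2s}{r-1}$ is the other point that needs care; the hypothesis $s\le 1/L$ enters only through the validity of \eqref{basic_ineq}.
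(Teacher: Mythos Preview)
Your plan is sound and the key checkpoints you stated---the coefficient $c=\frac{2s(r-3)(k+r-2)^2}{(r-1)^2}$ of $v_k$, the collapse of the $v_{k-1}$ coefficient to $-\frac{2s}{r-1}$, and the final coefficients $-\alpha\phi_k$ and $\alpha(\phi_{k-1}-2)$---all verify. Your observation that the third use of \eqref{basic_ineq} combines with its own $-\frac{cs}{2}\|\mfg\|^2$ into $\frac{c}{2s}(\|\mfy_{k-1}-\mfx_{k-1}\|^2-\|\Delta\mfx_k\|^2)$ is the reason the $\la\Delta\mfx_k,\mfw\ra$ and $\la\Delta\mfx_k,\Delta\mfx_{k-1}\ra$ cross terms never arise, so the only genuine inequality left on the quadratic side is Cauchy--Schwarz on $\la\Delta\mfx_{k-1},\mfw\ra$ together with $\la\mfx_{k-1}-\mfx^*,\mfw\ra\ge 0$.

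The paper's proof uses the same ingredients but organizes them differently. It introduces a second auxiliary vector $\mfw_k=\mfz_k+\mfz_{k-1}-\mfx_k-\mfx_{k-1}$ (not your $\mfw$) and splits $\mcE(k)-\mcE(k-1)$ into three pieces $L_1,L_2,L_3$: $L_1$ carries the correction direction $\mfr_{k-1}=\|\Delta\mfx_{k-1}\|G_s(\mfx_{k-1})/\|G_s(\mfx_{k-1})\|$, $L_2$ the $G_s(\mfy_{k-1})$ cross terms, and $L_3$ the function-value difference. Crucially, the paper applies \eqref{basic_ineq} only twice, with multipliers $4\xi_k$ and $\nu_k=\frac{2(k+r-2)(k+r-4)}{r-1}$ chosen so that the $v_k$ coefficient vanishes immediately; it then exhibits an exact algebraic identity $\bar L_1+L_2+\bar L_3=0$ that eliminates all $G_s(\mfy_{k-1})$ cross terms in one stroke. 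Your route instead uses the multipliers dictated by the decomposition of $\mfz_{k-1}$, leaving a residual $cv_k$ that you mop up with a third application of \eqref{basic_ineq}; this is more motivated (the multipliers are not ``magic'') but trades the clean cancellation identity for an extra step and an extra quadratic term $\frac{c}{2s}\|\mfy_{k-1}-\mfx_{k-1}\|^2$ that must be absorbed into the $\|\Delta\mfx_{k-1}\|^2$ budget. Both routes end with the same Cauchy--Schwarz step on $\la\mfr_{k-1},\Delta\mfx_{k-1}\ra$ and the same sign condition $\la\mfx_{k-1}-\mfx^*,G_s(\mfx_{k-1})\ra\ge 0$.
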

\begin{proof}
For simplicity, we denote 
$$
\mfr_k=\frac{\|\Delta \mfx_k\|}{\|G_s(\mfx_k)\|}G_s(\mfx_k), \,\,\xi_k=\frac{k+r-2}{r-1}, \,\,\nu_k = \frac{2(k+r-2)(k+r-4)}{r-1},
$$
and introduce two auxiliary variables $\mfz_k$ and $\mfw_k$ defined by
\begin{equation}
\label{zk_def}
\mfz_k  = \mfx_k + \frac{k-1}{r-1}\Delta \mfx_k, \,\,\mfw_k = \mfz_k + \mfz_{k-1}-\mfx_k-\mfx_{k-1}.
\end{equation}
We can also write $\mfz_{k-1}$ in the following way:
\begin{eqt}
\label{A3}
&\mfz_{k-1}=\mfx_{k-1}+\frac{k-2}{r-1}\Delta \mfx_{k-1}=\frac{k+r-2}{r-1}\left(\mfx_{k-1}+\frac{k-2}{k+r-2}\Delta \mfx_{k-1}\right)-\frac{k-1}{r-1}\mfx_{k-1}\\
=&\frac{k+r-2}{r-1}\left(\mfy_{k-1}+\frac{r-3}{k+r-2}\mfr_{k-1}\right)-\frac{k-1}{r-1}\mfx_{k-1}=\xi_k\mfy_{k-1}+\alpha \mfr_{k-1}-\frac{k-1}{r-1}\mfx_{k-1}.
\end{eqt}
The update rule \eqref{SDC-PM-upd} and \eqref{yy_upd} can be written as:
\begin{equation}
\label{A4}
\frac{k-2}{r-1}\Delta \mfx_{k-1}-\alpha \mfr_{k-1}=\xi_k\left(\Delta \mfx_k+sG_s(\mfy_{k-1})\right).
\end{equation}

Based on the equations \eqref{A3} and \eqref{A4}, we can write
\begin{eqt}
& \mfz_k - \mfz_{k-1} = \Delta \mfx_k+ \frac{k-1}{r-1} \Delta \mfx_k - \frac{k-2}{r-1} \Delta \mfx_{k-1}\\
=&\xi_k\Delta \mfx_k - \frac{k-2}{r-1} \Delta \mfx_{k-1}= -\alpha \mfr_{k-1} -\xi_ksG_s(\mfy_{k-1}).
\end{eqt}
\begin{eqt}
\label{A5}
&\mfz_k+\mfz_{k-1}=\mfz_k-\mfz_{k-1}+2\mfz_{k-1}\\
=&-\alpha \mfr_{k-1} - \xi_ksG_s(\mfy_{k-1})+2\xi_{k}\mfy_{k-1} - \frac{2(k-1)}{r-1}\mfx_{k-1} + 2\alpha \mfr_{k-1}\\
=&- \xi_ksG_s(\mfy_{k-1})+2\xi_{k}\mfy_{k-1} - \frac{2(k-1)}{r-1}\mfx_{k-1} + \alpha \mfr_{k-1}.
\end{eqt}
Using the equations \eqref{zk_def}, \eqref{A4} and the fact $\frac{k-1}{r-1}+\xi_k=\frac{2k+r-3}{r-1}=\frac{\phi_k}{r-1}$ yields
\begin{eqt}
\label{wk_p1}
&\mfw_k=\frac{k-1}{r-1}\Delta \mfx_k+ \frac{k-2}{r-1}\Delta \mfx_{k-1}=\frac{k-1}{r-1}\Delta \mfx_k+\alpha \mfr_{k-1}+\xi_k\left(\Delta \mfx_k+sG_s(\mfy_{k-1})\right)\\
=&\frac{\phi_k}{r-1}\Delta \mfx_k+\alpha \mfr_{k-1}+\xi_ksG_s(\mfy_{k-1}).
\end{eqt}

We now analyze the difference between $2\left\|\mfx_k - \mfx^* + \frac{k-1}{r-1}\Delta \mfx_k\right\|^2$ in $\mathcal{E}(k)$:

\begin{eqt}
\label{A7}
& 2\left\|\mfx_k - \mfx^* + \frac{k-1}{r-1}\Delta \mfx_k\right\|^2 - 2\left\|\mfx_{k-1} - \mfx^* + \frac{k-2}{r-1}\Delta \mfx_{k-1}\right\|^2\\
	=&  2\|\mfz_k - \mfx^*\|^2 - 2\|\mfz_{k-1} - \mfx^*\|^2 =  2(\mfz_k - \mfz_{k-1})^T(\mfz_k + \mfz_{k-1} - 2\mfx^*)\\
	=&  -2\xi_ksG_s(\mfy_{k-1})^T(\mfz_k + \mfz_{k-1} - 2\mfx^*) - 2\alpha \mfr_{k-1}^T(\mfz_k + \mfz_{k-1} - 2\mfx^*)\\
	=&  -2\xi_ksG_s(\mfy_{k-1})^T(\mfz_k + \mfz_{k-1} - 2\mfx^*)- 2\alpha \mfr_{k-1}^T\left(\mfx_k + \mfx_{k-1} - 2\mfx^*\right) - 2\alpha \mfr_{k-1}^T\mfw_k. \\
\end{eqt}
Then, the difference between $\frac{(r-3)(k-1)^2}{(r-1)^2}\|\Delta \mfx_k\|^2$ in $\mathcal{E}(k)$ is calculated by
\begin{eqt}
\label{A9}
& \frac{(r-3)(k-1)^2}{(r-1)^2}\|\Delta \mfx_k\|^2 - \frac{(r-3)(k-2)^2}{(r-1)^2}\|\Delta \mfx_{k-1}\|^2\\
	=&  (r-3)(\|\mfz_k - \mfx_k\|^2 - \|\mfz_{k-1} - \mfx_{k-1}\|^2)=(r-3)(\mfz_k - \mfz_{k-1} - \Delta \mfx_k)^T\mfw_k\\
	=& (r-3)\left(-\alpha \mfr_{k-1} - \xi_ksG_s(\mfy_{k-1}) - \Delta \mfx_k\right)^T\mfw_k.
\end{eqt}
By using \eqref{A7} and \eqref{A9}, we can split $\mathcal{E}(k) - \mathcal{E}(k-1)$ into three parts:

\begin{eqt}
\label{A10}
	& \mathcal{E}(k) - \mathcal{E}(k-1) \\
	=& -2\xi_kG_s(\mfy_{k-1})^T(\mfz_k + \mfz_{k-1} - 2\mfx^*) - 2\alpha \mfr_{k-1}^T\left(\mfx_k + \mfx_{k-1} - 2\mfx^*\right)\\
	&- 2\alpha\mfr_{k-1}^T\mfw_k-(r-3)\left(\alpha \mfr_{k-1}^T\mfw_k+\xi_ksG_s(\mfy_{k-1})^T\mfw_k +\Delta \mfx_k^T\mfw_k\right)\\
	& + \frac{2(k+r-2)^2s}{r-1}(f(\mfx_k) - f(\mfx^*)) - \frac{2(k+r-3)^2s}{r-1}(f(\mfx_{k-1}) - f(\mfx^*))\\
	=&-(r-3)(\mfr_{k-1} + \Delta \mfx_k)^T\mfw_k- 2\alpha \mfr_{k-1}^T\left(\mfx_k + \mfx_{k-1} - 2\mfx^*\right)\\
	&-2\xi_ksG_s(\mfy_{k-1})^T(\mfz_k + \mfz_{k-1} - 2\mfx^*)-(r-3)\xi_ksG_s(\mfy_{k-1})^T\mfw_k\\
	& + \frac{2(k+r-2)^2s}{r-1}(f(\mfx_k) - f(\mfx^*)) - \frac{2(k+r-3)^2s}{r-1}(f(\mfx_{k-1}) - f(\mfx^*)).\\
\end{eqt}
The quantities in the last three rows of \eqref{A10} are denoted as $L_1$, $L_2$ and $L_3$, respectively. From \eqref{wk_p1} and $r-3=\alpha(r-1)$, it follows that 
\begin{eqt}
\label{B17}
&L_1+4\alpha \mfr_{k-1}^T(\mfx_{k-1}-\mfx^*)= -(r-3)(\mfr_{k-1} + \Delta \mfx_k)^T\mfw_k-2\alpha \mfr_{k-1}^T\Delta \mfx_k\\
=& -(r-3)(\mfr_{k-1} + \Delta \mfx_k)^T\left(\frac{\phi_k}{r-1}\Delta \mfx_k+\alpha \mfr_{k-1}+\xi_ksG_s(\mfy_{k-1})\right)-2\alpha \mfr_{k-1}^T\Delta \mfx_k\\
=&-(r-3)\xi_k\mfr_{k-1}^TsG_s(\mfy_{k-1})-(r-3)\xi_k\Delta \mfx_k^TsG_s(\mfy_{k-1})\\
&-\alpha (\phi_k\|\Delta \mfx_k\|^2+(r-3)\|\mfr_{k-1}\|^2))-2(r-3)\xi_k\mfr_{k-1}^T\Delta \mfx_k\\
=&-\alpha \left(\phi_k\|\Delta \mfx_k\|^2+(r-3)\|\mfr_{k-1}\|^2+2(r-1)\xi_k\mfr_{k-1}^T(\Delta \mfx_k+sG_s(\mfy_{k-1}))\right)+\bar L_1,\\
\end{eqt}
where 
\begin{eqt}
\label{bar_L1}
\bar L_1 = &(r-3)\xi_k\mfr_{k-1}^TsG_s(\mfy_{k-1})-(r-3)\xi_k\Delta \mfx_k^TsG_s(\mfy_{k-1})\\
=&(r-3)\xi_ksG_s(\mfy_{k-1})^T(\mfr_{k-1}-\Delta \mfx_k).
\end{eqt}
Utilizing the equation \eqref{A4} and $\|\mfr_{k-1}\|=\|\Delta \mfx_{k-1}\|$, we obtain

\begin{eqt}
\label{L1_part}
&\phi_k\|\Delta \mfx_k\|^2+(r-3)\|\mfr_{k-1}\|^2+2(r-1)\xi_k\mfr_{k-1}^T(\Delta \mfx_k+sG_s(\mfy_{k-1}))\\
=&\phi_k\|\Delta \mfx_k\|^2+(r-3)\|\mfr_{k-1}\|^2+2\mfr_{k-1}^T((k-2)\Delta \mfx_{k-1}-(r-3)\mfr_{k-1})\\
=&\phi_k\|\Delta \mfx_k\|^2-(r-3)\|\mfr_{k-1}\|^2+2(k-2)\mfr_{k-1}^T\Delta \mfx_{k-1}\\
\geq&\phi_k\|\Delta \mfx_k\|^2-(2k-r-7)\|\mfr_{k-1}\|^2=\phi_k\|\Delta \mfx_k\|^2-(\phi_{k-1}-2)\|\Delta \mfx_{k-1}\|^2.\\
\end{eqt}
The last inequality even holds when $k=1$ because $\mfr_0=\Delta \mfx_0=0$. By setting $\mfy=\mfx_{k-1}$, $\mfx=\mfx^*$ in the basi inequality \eqref{basic_ineq}, we have 
\begin{eqt}
\label{inequ1}
&\frac{\|G_s(\mfx_{k-1})\|}{\|\Delta \mfx_{k}\|}\mfr_{k-1}^T(\mfx_{k-1}-\mfx^*)=G_s(\mfx_{k-1})^T(\mfx_{k-1}-\mfx^*)\\
\geq& f(\mfx_{k-1}-sG_s(\mfx_{k-1}))-f(\mfx^*)+\frac{s}{2}\|G_s(\mfx_{k-1})\|^2\geq0.
\end{eqt} Substituting inequalities \eqref{L1_part} and \eqref{inequ1} in \eqref{B17} yields
\begin{equation}
\label{L1_est}
L_1 \leq -\alpha\phi_k\|\Delta \mfx_k\|^2+\alpha(\phi_{k-1}-2)\|\Delta \mfx_{k-1}\|^2+\bar L_1.\\
\end{equation}

From the definition of $\mfw_k$ and the equation \eqref{A5}, we obtain
\begin{eqt}
&2(\mfz_k + \mfz_{k-1} - 2\mfx^*)+(r-3)\mfw_k\\
=&2(\mfz_k + \mfz_{k-1} - 2\mfx^*)+(r-3)((\mfz_k + \mfz_{k-1} - 2\mfx^*)-(\mfx_k+\mfx_{k-1}-2\mfx^*))\\
=&(r-1)\left(- \xi_ksG_s(\mfy_{k-1})+\xi_{k}2\mfy_{k-1} - \frac{2(k-1)}{r-1}\mfx_{k-1}+\alpha\mfr_{k-1}- 2\mfx^*\right)\\
&-2(r-3)(\mfx_{k-1}-\mfx^*)-(r-3)\Delta \mfx\\
=&(r-1)\left(-\xi_ksG_s(\mfy_{k-1})+2\xi_{k}\mfy_{k-1} - \frac{2(k-1)}{r-1}\mfx_{k-1}- 2\mfx^*\right)\\
&-2(r-3)(\mfx_{k-1}-\mfx^*)-(r-3)(\Delta \mfx-\mfr_{k-1}).\\
\end{eqt}
The above estimation implies
\begin{eqt}
\label{L2}
&L_2 = -\xi_ksG_s(\mfy_{k-1})^T(2(\mfz_k + \mfz_{k-1} - 2\mfx^*)+(r-3)\mfw_k)\\
=&-(r-1)\xi_ksG_s(\mfy_{k-1})^T\left(- \xi_ksG_s(\mfy_{k-1})+2\xi_{k}\mfy_{k-1} - \frac{2(k-1)}{r-1}\mfx_{k-1}- 2\mfx^*\right)\\
&+2(r-3)\xi_ksG_s(\mfy_{k-1})^T(\mfx_{k-1}-\mfx^*)+(r-3)\xi_ksG_s(\mfy_{k-1})^T(\Delta \mfx_k-\mfr_{k-1}).\\
\end{eqt}

Finally, we compute $L_3$. Note that $\mfx_k=\mfy_{k-1}-sG_s(\mfy_{k-1})$. Taking $\mfy=\mfy_{k-1}$, $\mfx=\mfx_k$ or $\mfx^*$ in the basic inequality \eqref{basic_ineq} gives
\begin{eqt}
\label{inequ2}
& f(\mfx_k)\le f(\mfx_{k-1}) + G_s(\mfy_{k-1})^T(\mfy_{k-1} - \mfx_{k-1}) - \frac{s}{2}\|G_s(\mfy_{k-1})\|^2,\\
& f(\mfx_k)\le f(\mfx^*) + G_s(\mfy_{k-1})^T(\mfy_{k-1} - \mfx^*) - \frac{s}{2}\|G_s(\mfy_{k-1})\|^2.
\end{eqt}
Based on the above inequalities, we observe that
\begin{eqt}
\label{L3_est}
&L_3+\frac{2s}{r-1}(f(\mfx_{k-1}) - f(\mfx^*))\\
=& \frac{2(k+r-2)^2s}{r-1}(f(\mfx_k) - f(\mfx^*)) - \frac{2(k+r-2)(k+r-4)s}{r-1}(f(\mfx_{k-1}) - f(\mfx^*))\\
=&4\xi_k s(f(\mfx_k)-f(\mfx^*))+\nu_k s(f(\mfx_k)-f(\mfx_{k-1}))\\
\leq &4\xi_k\left(sG_s(y_{k-1})^T(\mfy_{k-1}-\mfx^*)-\frac{s}{2}\|G_s(\mfy_{k-1})\|^2\right)\\
&+\nu_k\lp G_s(\mfy_{k-1})^T(\mfy_{k-1} - \mfx^*) - \frac{s}{2}\|G_s(\mfy_{k-1})\|^2\rp= \bar L_3.
\end{eqt}
Note that $4\xi_k+\nu_k=\frac{2(k+r-2)^2}{r-1}=\frac{(r-1)\xi_k^2}{8}$. $\bar L_3$ can be rewritten into

\begin{eqt}
\label{bar_L3}
\bar L_3=& (4\xi_k+\nu_k)s\left(G_s(\mfy_{k-1})^T\mfy_{k-1}-\frac{s}{2}\|G_s(\mfy_{k-1})\|^2\right)-sG_s(\mfy_{k-1})^T(4\xi_k\mfx^*+\nu _k\mfx_{k-1})\\
=&2(r-1)\xi_k^2s\left(G_s(\mfy_{k-1})^T\mfy_{k-1}-\frac{s}{2}\|G_s(\mfy_{k-1})\|^2\right)\\
    &-sG_s(\mfy_{k-1})^T\lp2((r-1)-(r-3))\xi_k\mfx^*+2((k-1)-(r-3))\xi_k\mfx_{k-1}\rp\\
 =&(r-1)\xi_ksG_s(\mfy_{k-1})^T\left(2\xi_k\mfy_{k-1} - \frac{2(k-1)}{r-1}\mfx_{k-1} - 2\mfx^*-\xi_ksG_s(\mfy_{k-1})\right)\\
 &  + 2(r-3)\xi_ksG_s(\mfy_{k-1})^T(\mfx^* - \mfx_{k-1}).
\end{eqt}
Together with the equations \eqref{bar_L1} and \eqref{L2}, we have 
\begin{equation}
\label{l123}
\bar L_1+L_2+\bar L_3=0.
\end{equation}
Therefore, substituting \eqref{L1_est}, \eqref{L3_est} and \eqref{l123} in \eqref{A10} renders \eqref{estamate_lya}.
\end{proof} 

Based on Lemma \ref{lemma:lya}, we have the following estimation of $\mcE(k)$.

\begin{lem}[Discrete Lyapunov analysis of FISC-PM]
\label{prop:convergence}
The Lyapunov function $\mathcal{E}(k)$ defined in \eqref{lya_dis} satisfies
\begin{equation}
\mathcal{E}(k)\leq\mathcal{E}(0) -\frac{2s}{r-1}(f(\mfx_{0})-f(\mfx^*)).
\end{equation}
\end{lem}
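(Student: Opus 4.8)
The plan is to telescope the one-step estimate \eqref{estamate_lya} of Lemma~\ref{lemma:lya}. Summing it over $j=1,\dots,k$ gives
\[
\mcE(k)-\mcE(0)\le \alpha\sum_{j=1}^{k}\Big[(\phi_{j-1}-2)\|\Delta \mfx_{j-1}\|^2-\phi_j\|\Delta \mfx_j\|^2\Big]-\frac{2s}{r-1}\sum_{j=1}^{k}\big(f(\mfx_{j-1})-f(\mfx^*)\big),
\]
so all that remains is to show that the first sum on the right is $\le 0$ and that the second sum is at least $f(\mfx_0)-f(\mfx^*)$.

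For the quadratic sum, I would write $(\phi_{j-1}-2)\|\Delta\mfx_{j-1}\|^2=\phi_{j-1}\|\Delta\mfx_{j-1}\|^2-2\|\Delta\mfx_{j-1}\|^2$ and set $a_j:=\phi_j\|\Delta\mfx_j\|^2$, so that each bracket equals $a_{j-1}-a_j-2\|\Delta\mfx_{j-1}\|^2$ and the sum collapses to $a_0-a_k-2\sum_{j=1}^{k}\|\Delta\mfx_{j-1}\|^2$. Since FISC-PM is initialized with $\mfx_{-1}=\mfx_0$, we have $\Delta\mfx_0=0$ and hence $a_0=0$; moreover $\phi_k=2k+r-3>0$ for $k\ge1$ and $\alpha=(r-3)/(r-1)\ge0$ for $r\ge3$, so $a_k\ge0$ and the whole contribution $\alpha\big(a_0-a_k-2\sum_{j=1}^{k}\|\Delta\mfx_{j-1}\|^2\big)$ is nonpositive. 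For the function-value sum, $\mfx^*$ minimizes $f$, so every term $f(\mfx_{j-1})-f(\mfx^*)$ is nonnegative, and discarding the terms with $j\ge2$ only weakens the bound: $-\frac{2s}{r-1}\sum_{j=1}^{k}(f(\mfx_{j-1})-f(\mfx^*))\le-\frac{2s}{r-1}(f(\mfx_0)-f(\mfx^*))$. Combining the two estimates yields exactly $\mcE(k)-\mcE(0)\le-\frac{2s}{r-1}(f(\mfx_0)-f(\mfx^*))$ for every $k\ge1$ (the claimed inequality being understood over the iterates actually generated, i.e.\ $k\ge1$).

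I do not expect a genuine obstacle here: all the analytic difficulty was already spent in establishing \eqref{estamate_lya}, and what is left is pure bookkeeping. The only two points that deserve a moment of care are (i) recognizing that the telescoped boundary term $a_0$ vanishes thanks to the initialization $\mfx_{-1}=\mfx_0$, and (ii) checking the three sign facts $\phi_k>0$, $\alpha\ge0$, and $f(\mfx_{j-1})\ge f(\mfx^*)$, which are precisely what allow us to drop the unwanted terms in the favorable direction.
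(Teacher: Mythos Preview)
Your proposal is correct and follows essentially the same route as the paper: sum \eqref{estamate_lya} over $j=1,\dots,k$, telescope the $\phi_j\|\Delta\mfx_j\|^2$ terms using $\Delta\mfx_0=0$, and discard the remaining nonnegative pieces (the $-2\|\Delta\mfx_{j-1}\|^2$ terms and the $f(\mfx_{j-1})-f(\mfx^*)$ terms for $j\ge2$). The paper's write-up is terser but the argument is the same bookkeeping you describe.
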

\begin{proof}
Note that $\Delta \mfx_{0}=\mfx_0-\mfx_{-1}=0$. Summing \eqref{estamate_lya} for $l=1$ to $k$ yields

\begin{eqt}
\mcE(k)-\mcE(0)\leq&\alpha\sum_{l=1}^k\left((\phi_k-2)\|\Delta \mfx_{l-1}\|^2-\phi_k\|\Delta \mfx_l\|^2\right)-\frac{2s}{r-1}\sum_{l=1}^k(f(\mfx_{l-1})-f(\mfx^*))\\
\leq&\alpha\left(-\phi_k\|\Delta \mfx_k\|^2-2\sum_{l=2}^{k-1}\|\Delta \mfx_l\|^2\right)-\frac{2s}{r-1}(f(\mfx_{0})-f(\mfx^*))\\
\leq& -\frac{2s}{r-1}(f(\mfx_{0})-f(\mfx^*)).\\
\end{eqt}
\end{proof}

Theorem \ref{ot-2} tells that FISC-ODE has the $\mcO(t^{-2})$ convergence rate and the following theorem  is a discretized analog of Theorem \ref{ot-2}.

\begin{theorem}[The $\mcO(k^{-2})$ convergence rate of FISC-PM]
\label{prop:Ok2}
Let $\{\mfx_k\}$ be a sequence given by \eqref{SDC-PM-upd} and \eqref{yy_upd}. The step size is fixed as $0<s\leq1/L$ and $\beta_k, \gamma_k$ are specified by \eqref{FISC-para} with $l_k=k$. Then, we have
$$
f(\mfx_k)-f(\mfx^*)\leq\frac{(r-1)C_0}{2(k+r-2)^2s}=\mcO(k^{-2}),
$$
where 
$$
C_0=\mcE(0)-\frac{2s}{r-1}(f(\mfx_{0})-f(\mfx^*))=2\|\mfx_0-\mfx^*\|^2+(r-3)s\left(f(\mfx_0)-f(\mfx^*)\right).
$$
\end{theorem}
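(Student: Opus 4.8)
The plan is to read off the rate directly from the one-step Lyapunov estimate, so that almost all of the real work is already contained in Lemma~\ref{prop:convergence}. First I would apply that lemma to obtain, for every $k\ge 0$,
\[
\mcE(k)\le \mcE(0)-\frac{2s}{r-1}\big(f(\mfx_0)-f(\mfx^*)\big)=C_0 .
\]
Next I would lower-bound $\mcE(k)$ by discarding its non-essential terms. In the definition \eqref{lya_dis} the first and third summands are squared Euclidean norms multiplied by non-negative constants --- here the hypothesis $r\ge 3$ is exactly what makes the coefficient $(r-3)(k-1)^2/(r-1)^2$ non-negative --- and the middle summand is non-negative because $\mfx^*$ minimizes $f$, so $f(\mfx_k)-f(\mfx^*)\ge 0$. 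Keeping only the middle term therefore gives
\[
\frac{2(k+r-2)^2 s}{r-1}\big(f(\mfx_k)-f(\mfx^*)\big)\le \mcE(k)\le C_0 ,
\]
and solving for $f(\mfx_k)-f(\mfx^*)$ yields the claimed bound $\tfrac{(r-1)C_0}{2(k+r-2)^2 s}$. Since $(k+r-2)^2=\Theta(k^2)$, this is $\mcO(k^{-2})$.

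It then remains to put $C_0$ in closed form. I would use the initialization $\mfx_{-1}=\mfx_0$, which forces $\mfx_0-\mfx_{-1}=0$; evaluating \eqref{lya_dis} at $k=0$ then collapses the first term to $2\|\mfx_0-\mfx^*\|^2$ and kills the third term, so $\mcE(0)=2\|\mfx_0-\mfx^*\|^2+\frac{2(r-2)^2 s}{r-1}(f(\mfx_0)-f(\mfx^*))$. Subtracting $\frac{2s}{r-1}(f(\mfx_0)-f(\mfx^*))$ and simplifying the coefficient of $f(\mfx_0)-f(\mfx^*)$ with the elementary identity $(r-2)^2-1=(r-3)(r-1)$ produces the stated expression for $C_0$.

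I do not expect a genuine obstacle in this last step; the substance of the argument is entirely upstream, in Lemma~\ref{lemma:lya} and the telescoping that yields Lemma~\ref{prop:convergence} (the pattern $(\phi_{k-1}-2)\|\Delta\mfx_{k-1}\|^2-\phi_k\|\Delta\mfx_k\|^2$ collapses because $\phi_k$ is increasing and $\Delta\mfx_0=0$, and the objective-gap terms accumulate to $-\tfrac{2s}{r-1}(f(\mfx_0)-f(\mfx^*))$). The only points I would double-check are that every quantity dropped in the summation is indeed non-negative and that nothing beyond the basic inequality \eqref{basic_ineq} (valid for $0<s\le 1/L$) is invoked. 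As a sanity check, at $r=3$ the third term of $\mcE$ disappears, $C_0=2\|\mfx_0-\mfx^*\|^2$, and the bound reduces to $f(\mfx_k)-f(\mfx^*)\le 2\|\mfx_0-\mfx^*\|^2/((k+1)^2 s)$, which is precisely the classical FISTA guarantee, consistent with the earlier remark that FISTA is the $r=3$ case of FISC-PM.
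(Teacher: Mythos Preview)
Your proposal is correct and follows exactly the paper's approach: invoke Lemma~\ref{prop:convergence} to get $\mcE(k)\le C_0$, drop the nonnegative norm terms in \eqref{lya_dis} to isolate the function-value term, and rearrange. One minor point: carrying your own algebra through actually gives $C_0=2\|\mfx_0-\mfx^*\|^2+2(r-3)s\bigl(f(\mfx_0)-f(\mfx^*)\bigr)$, so the coefficient $(r-3)s$ in the stated theorem appears to be a typo for $2(r-3)s$; this does not affect the $\mcO(k^{-2})$ conclusion (and your $r=3$ sanity check is unaffected since the term vanishes there).
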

\begin{proof}
By Lemma \ref{prop:convergence}, the sequence of $\{\mfx_k\}$ given by FISC-PM satisfies
$$
f(\mfx_k)-f(\mfx^*)\leq\frac{r-1}{2(k+r-2)^2s}\mathcal{E}(k)\leq\frac{r-1}{2(k+r-2)^2s}C_0=\mcO(k^{-2}),
$$
which completes the proof.
\end{proof}

Note that FISC-ns is FISC-PM with $h=0$. Hence, we also prove the $\mcO(k^{-2})$ convergence rate of FISC-ns for smooth convex optimization problems. 

\section{Numerical Experiments}
\label{sec:num}
\subsection{The Lagrangian form of Lasso}
\label{lasso}
We compare FIRE, FISC and other optimization solvers on the following problem:
$$
\min\limits_{\mfx\in\mbR^n}\frac{1}{2}\|A\mfx-b\|^2+\lambda \|\mfx\|_1.
$$
Here we have $\psi(\mfx)=\frac{1}{2}\|A\mfx-b\|^2, \, h(\mfx)=\lambda \|\mfx\|_1$, where $A\in \mbR^{m\times n}, b\in \mbR^m, \lambda>0$. The proximal mapping is computed as
\begin{equation}\label{l1_prox}
\lp\mathrm{prox}_{h}^{s}(\mfx)\rp_i=\text{sign}(\mfx_i)\max\{|\mfx_i|-\lambda s,0\}.
\end{equation}
In our numerical experiment, $\lambda$ varies from different test cases and it is around $8\times 10^{-3}$. 

\subsubsection{Algorithm details and the implementation}
We describe the implementation details of our method and of the state-of-the-art algorithms used in our numerical comparison. The solvers used for comparison include SNF \citep{asnmw}, ASSN \citep{arssn}, FPC-AS \citep{afafs} and SpaRSA \citep{srbsa}. We give an overview of the tested algorithms:
\begin{itemize}
\item SNF is a semi-smooth Newton type method which uses the filter strategy. 
\item SNF(aCG) is the SNF solver with an adaptive parameter strategy in the CG method for solving the Newton equation.
\item ASSN is an adaptive semi-smooth Newton method.
\item FPC-AS is a first-order method that uses a fixed-point iteration under Barzilai-Borwein (BB) steps \citep{BBstep} and the continuation strategy.
\item SpaRSA, which resembles FPC-AS, is also a first-order method using BB steps and the continuation strategy.
\item F-PG(M)/FS-PG(M)($r$) is the FIRE/FISC algorithm using the proximal gradient (the proximal mapping) with the continuation strategy. The step size is obtained from the nonmonotone line search with the BB step as the initial guess. The number in the bracket is the parameter $r$ for FISC-PG(M).  
\end{itemize}
The continuation strategy in F-PG(M)/FS-PG(M) is same as in \citep{afafs}. Note that FISC-PM with $r=3$ recovers FISTA. We take same parameters for ASSN, FPC-AS, SpaRSA and SNF as in \citep{asnmw}. 

\subsubsection{The numerical comparison}
We use test problems from \citep{asnmw}, which are constructed as follows. Firstly, we randomly generate a sparse solution $\bar \mfx \in \mbR^n$ with $k$ nonzero entries, where $n = 512^2 = 262144$ and $k = [n/40] = 5553$. The $k$ different indices are uniformly chosen from $\{1, 2, . . . , n\}$ and the magnitude of each nonzero element is set by $\bar x_i =c_1(i)10^{dc_2(i)/20} $, where $c_1(i)$ is randomly chosen from $\{-1, 1\}$ with probability 1/2, respectively, $c_2 (i)$ is uniformly distributed in $[0, 1]$ and $d$ is a dynamic range which can influence the efficiency of the solvers. Then we choose $m = n/8 = 32768$ random cosine measurements, i.e., $A\bar \mfx = (dct(\bar \mfx))_J$, where J contains $m$ different indices randomly chosen from $\{1, 2, . . . , n\}$ and $dct$ is the discrete cosine transform. Finally, we construct the input data by $b = A\bar \mfx +w$, where $w$ is an isotropic Gaussian noise with a standard deviation $\bar \sigma = 0.1$.

To compare fairly, we set a uniform stopping criterion. For a certain tolerance $\epsilon$, we obtain a solution $\mfx_{newt}$ using ASSN \citep{arssn} such that $\|s G_s(\mfx_{newt})\|\leq\epsilon$. Then, we terminate all methods by the relative criterion
$$
\frac { f \left( \mfx ^ { k } \right) - f \left( \mfx ^ { * } \right) } { \max \left\{ |f \left( \mfx ^ { * } \right)| , 1 \right\} } \leq \frac { f \left( \mfx _ { n e w t } \right) - f \left( \mfx ^ { * } \right) } { \max \left\{ |f \left( \mfx ^ { * } \right)| , 1 \right\} },
$$
where $f(\mfx)$ is the objective function and $\mfx^*$ is a highly accurate solution using ASSN \citep{arssn} under the criterion $\|s G_s(\mfx^*)\|\leq10^{-13}$.

We solve the test problems under different tolerances $\epsilon \in \{ 10 ^ { - 0 } , 10 ^ { - 1 } , 10 ^ { - 2 } , 10 ^ { - 4 } ,$ $ 10 ^ { - 6 } \}$ and dynamic ranges $d \in \{ 20,40,60,80 \}$. Since the evaluations of $dct$ dominate the overall computation, we mainly use the total numbers of $A$-calls and $A^T$-calls $N_A$ to compare the efficiency of different solvers. Tables \ref{20db}-\ref{80db} show the average numbers of $N_A$ and CPU time over $10$ independent trials. 

\begin{table}[!htp]
\caption{Total number of $A$-calls and $A^T$-calls $N_A$ and CPU time (in seconds) averaged over $10$ independent runs with dynamic range $20 dB$}\label{20db}
\setlength{\tabcolsep}{0.9mm}{
\begin{tabular}{lllllllllll}
\hline
Method&\multicolumn{2}{l}{$\epsilon:10^0$}&\multicolumn{2}{l}{$\epsilon:10^{-1}$}&\multicolumn{2}{l}{$\epsilon:10^{-2}$}&\multicolumn{2}{l}{$\epsilon:10^{-4}$}&\multicolumn{2}{l}{$\epsilon:10^{-6}$}\\
\cline{2-11}
&Time&$N_A$&Time&$N_A$&Time&$N_A$&Time&$N_A$&Time&$N_A$\\
\midrule[1pt]
SNF&$1.09$&$84.6$&$2.63$&$205.0$&$3.20$&$254.2$&$3.85$&$307.0$&$4.59$&$373.0$\\
SNF(aCG)&$1.11$&$84.6$&$2.62$&$205.0$&$3.24$&$254.2$&$4.13$&$331.2$&$6.62$&$486.2$\\
ASSN&$1.13$&$89.8$&$1.82$&$145.0$&$2.10$&$\mathbf{173.0}$&$\mathbf{2.97}$&$\mathbf{246.4}$&$\mathbf{3.55}$&$\mathbf{298.2}$\\
FPC-AS&$1.45$&$109.8$&$5.08$&$366.0$&$6.88$&$510.4$&$9.56$&$719.4$&$9.90$&$740.8$\\
SpaRSA&$4.92$&$517.2$&$4.84$&$519.2$&$5.12$&$539.8$&$5.86$&$627.0$&$6.61$&$705.8$\\
F-PG     &$2.14$ &$190.4$ &$3.21$ &$291.2$ &$4.25$ &$376.8$ &$6.79$ &$600.8$ &$9.05$ &$801.8$ \\
FS-PG(3) &$0.81$ &$71.2$ &$1.34$ &$\mathbf{119.4}$ &$\mathbf{1.93}$ &$175.6$ &$3.24$ &$283.8$ &$4.48$ &$394.8$ \\
FS-PG(5) &$\mathbf{0.70}$ &$\mathbf{64.4}$ &$\mathbf{1.32}$ &$121.2$ &$2.07$ &$182.0$ &$3.24$ &$286.6$ &$4.39$ &$390.2$ \\
F-PM     &$0.95$ &$81.8$ &$1.54$ &$140.0$ &$2.11$ &$180.4$ &$3.91$ &$338.8$ &$5.14$ &$464.2$ \\
FS-PM(3) &$1.12$ &$97.0$ &$1.97$ &$168.0$ &$3.51$ &$298.6$ &$6.71$ &$596.0$ &$9.39$ &$817.0$ \\
FS-PM(5) &$0.98$ &$87.4$ &$1.68$ &$141.4$ &$2.65$ &$227.0$ &$6.36$ &$560.2$ &$8.08$ &$702.2$ \\
\hline
\end{tabular}}
\end{table}

\begin{table}[!htp]
\caption{Total number of $A$-calls and $A^T$-calls $N_A$ and CPU time (in seconds) averaged over $10$ independent runs with dynamic range $40 dB$}
\setlength{\tabcolsep}{0.9mm}{
\begin{tabular}{lllllllllll}
\hline
Method&\multicolumn{2}{l}{$\epsilon:10^0$}&\multicolumn{2}{l}{$\epsilon:10^{-1}$}&\multicolumn{2}{l}{$\epsilon:10^{-2}$}&\multicolumn{2}{l}{$\epsilon:10^{-4}$}&\multicolumn{2}{l}{$\epsilon:10^{-6}$}\\
\cline{2-11}
&Time&$N_A$&Time&$N_A$&Time&$N_A$&Time&$N_A$&Time&$N_A$\\
\midrule[1pt]
SNF&$2.06$&$158.2$&$5.01$&$380.8$&$6.19$&$483.2$&$6.69$&$525.0$&$7.16$&$566.8$\\
SNF(aCG)&$2.08$&$158.2$&$4.97$&$380.8$&$6.16$&$483.2$&$7.07$&$553.6$&$7.30$&$580.0$\\
ASSN&$2.28$&$182.2$&$3.53$&$285.4$&$4.10$&$338.6$&$4.97$&$\textbf{407.0}$&$\mathbf{5.56}$&$\textbf{459.2}$\\
FPC-AS&$2.12$&$158.0$&$5.34$&$399.2$&$7.72$&$578.4$&$9.62$&$720.2$&$10.41$&$774.8$\\
SpaRSA&$5.05$&$523.4$&$5.07$&$530.0$&$5.56$&$588.2$&$6.38$&$671.6$&$7.28$&$755.8$\\
F-PG     &$4.28$ &$378.0$ &$5.76$ &$522.4$ &$7.28$ &$642.8$ &$9.28$ &$813.6$ &$11.05$ &$990.0$ \\
FS-PG(3) &$1.71$ &$153.6$ &$3.05$ &$276.4$ &$3.94$ &$354.6$ &$4.89$ &$439.6$ &$6.37$ &$567.2$ \\
FS-PG(5) &$\mathbf{1.62}$ &$\mathbf{143.6}$ &$\mathbf{2.72}$ &$\mathbf{245.6}$ &$\mathbf{3.46}$ &$\mathbf{317.6}$ &$\mathbf{4.41}$ &$415.6$ &$5.68$ &$518.0$ \\
F-PM     &$2.02$ &$171.2$ &$2.68$ &$244.0$ &$3.94$ &$347.4$ &$5.34$ &$480.8$ &$7.09$ &$626.2$ \\
FS-PM(3) &$2.11$ &$184.2$ &$3.14$ &$279.8$ &$4.68$ &$424.0$ &$7.29$ &$648.2$ &$10.11$ &$903.4$ \\
FS-PM(5) &$2.17$ &$191.2$ &$3.50$ &$308.8$ &$4.54$ &$401.4$ &$6.07$ &$537.0$ &$8.25$ &$716.8$ \\
\hline
\end{tabular}}
\end{table}
\begin{table}[!htp]
\caption{Total number of $A$-calls and $A^T$-calls $N_A$ and CPU time (in seconds) averaged over $10$ independent runs with dynamic range $60 dB$}
\setlength{\tabcolsep}{0.8mm}{
\begin{tabular}{lllllllllll}
\hline
Method&\multicolumn{2}{l}{$\epsilon:10^0$}&\multicolumn{2}{l}{$\epsilon:10^{-1}$}&\multicolumn{2}{l}{$\epsilon:10^{-2}$}&\multicolumn{2}{l}{$\epsilon:10^{-4}$}&\multicolumn{2}{l}{$\epsilon:10^{-6}$}\\
\cline{2-11}
&Time&$N_A$&Time&$N_A$&Time&$N_A$&Time&$N_A$&Time&$N_A$\\
\midrule[1pt]
SNF&$5.12$&$391.8$&$8.28$&$648.8$&$9.86$&$777.6$&$10.44$&$828.2$&$11.13$&$881.0$\\
SNF(aCG)&$5.05$&$391.8$&$8.32$&$648.8$&$9.89$&$777.6$&$10.83$&$861.2$&$11.37$&$903.2$\\
ASSN&$3.60$&$295.4$&$5.01$&$416.4$&$5.95$&$492.0$&$6.97$&$\mathbf{582.4}$&$7.66$&$\mathbf{642.4}$\\
FPC-AS&$\mathbf{3.14}$&$\mathbf{232.2}$&$8.89$&$644.0$&$11.61$&$844.4$&$13.80$&$1004.4$&$14.08$&$1031.2$\\
SpaRSA&$5.48$&$561.2$&$5.69$&$598.2$&$6.57$&$683.2$&$7.70$&$797.8$&$8.62$&$900.6$\\
F-PG     &$7.07$ &$638.6$ &$8.77$ &$780.8$ &$10.35$ &$937.2$ &$13.05$ &$1157.2$ &$14.85$ &$1338.0$ \\
FS-PG(3) &$3.53$ &$328.6$ &$4.58$ &$422.0$ &$5.60$ &$506.0$ &$6.83$ &$619.8$ &$7.96$ &$714.6$ \\
FS-PG(5) &$3.49$ &$319.0$ &$4.58$ &$428.6$ &$5.72$ &$520.6$ &$\mathbf{6.71}$ &$612.8$ &$\mathbf{7.58}$ &$695.0$ \\
F-PM     &$3.53$ &$310.4$ &$\mathbf{4.14}$ &$\mathbf{374.0}$ &$\mathbf{5.43}$ &$\mathbf{485.8}$ &$7.98$ &$720.2$ &$9.65$ &$868.6$ \\
FS-PM(3) &$3.76$ &$342.0$ &$4.74$ &$429.8$ &$6.50$ &$584.8$ &$10.52$ &$950.2$ &$13.32$ &$1201.2$ \\
FS-PM(5) &$3.48$ &$307.6$ &$4.19$ &$383.2$ &$5.53$ &$502.4$ &$8.01$ &$703.4$ &$9.33$ &$848.8$ \\
\hline
\end{tabular}}
\end{table}
\begin{table}[!htp]
\caption{Total number of $A$-calls and $A^T$-calls $N_A$ and CPU time (in seconds) averaged over $10$ independent runs with dynamic range $80 dB$}\label{80db}
\setlength{\tabcolsep}{0.8mm}{
\begin{tabular}{lllllllllll}
\hline
Method&\multicolumn{2}{l}{$\epsilon:10^0$}&\multicolumn{2}{l}{$\epsilon:10^{-1}$}&\multicolumn{2}{l}{$\epsilon:10^{-2}$}&\multicolumn{2}{l}{$\epsilon:10^{-4}$}&\multicolumn{2}{l}{$\epsilon:10^{-6}$}\\
\cline{2-11}
&Time&$N_A$&Time&$N_A$&Time&$N_A$&Time&$N_A$&Time&$N_A$\\
\midrule[1pt]
SNF&$7.65$&$591.0$&$10.87$&$841.6$&$12.49$&$978.6$&$13.08$&$1024.8$&$15.89$&$1227.6$\\
SNF(aCG)&$7.58$&$591.0$&$10.78$&$841.6$&$12.44$&$978.6$&$13.30$&$1042.2$&$13.99$&$1105.8$\\
ASSN&$5.96$&$482.8$&$7.47$&$601.0$&$8.39$&$690.6$&$9.52$&$780.6$&$10.32$&$852.6$\\
FPC-AS&$\mathbf{4.28}$&$\mathbf{321.4}$&$8.28$&$611.0$&$10.61$&$788.0$&$11.85$&$883.2$&$12.13$&$902.0$\\
SpaRSA&$5.18$&$543.2$&$6.26$&$665.4$&$7.35$&$763.0$&$8.26$&$871.8$&$8.98$&$942.0$\\
F-PG     &$7.18$ &$642.8$ &$8.90$ &$792.8$ &$10.35$ &$951.0$ &$12.47$ &$1134.8$ &$13.50$ &$1231.6$ \\
FS-PG(3) &$4.85$ &$444.8$ &$6.09$ &$555.4$ &$7.01$ &$649.2$ &$7.76$ &$727.0$ &$8.65$ &$789.2$ \\
FS-PG(5) &$4.30$ &$407.2$ &$5.72$ &$521.6$ &$6.77$ &$625.8$ &$\mathbf{7.64}$ &$\mathbf{702.0}$ &$\mathbf{8.15}$ &$\mathbf{753.2}$ \\
F-PM     &$4.17$ &$388.8$ &$\mathbf{5.26}$ &$\mathbf{463.2}$ &$\mathbf{6.55}$ &$583.2$ &$8.14$ &$729.2$ &$9.06$ &$814.6$ \\
FS-PM(3) &$6.00$ &$533.4$ &$6.87$ &$635.4$ &$8.41$ &$748.4$ &$13.08$ &$1162.8$ &$15.04$ &$1348.4$ \\
FS-PM(5) &$4.99$ &$436.4$ &$5.75$ &$525.0$ &$7.08$ &$639.8$ &$9.51$ &$860.0$ &$10.93$ &$987.0$ \\
\hline
\end{tabular}}
\end{table}

From the numerical results, with the increase of the dynamic range, FS-PG(5) is competitive to ASSN or even outperform ASSN in terms of both cpu time and $N_A$. If only a low precision is required, i.e., $\epsilon=10^0$, FPC-AS has the smallest $N_A$ with dynamic ranges 40dB, 60dB and 80dB. With a relative low precision of $\epsilon$, F-PM achieves better performance than FS-PG(5). Although in one iteration F-PM has to calculate the proximal gradient twice, F-PM performs much better than F-PG. In general, FISC with $r=5$ has better performance than FISC with $r=3$. These observations indicate the strength of SDC in general.

\subsection{Logistic regression}
We consider the $\ell_1$-logistic regression problem
\label{sec:logreg}
\begin{equation}
\label{log_reg}
\min_{\mfx=(\hat\mfx,y)\in\mbR^{n+1}}\frac{1}{N}\sum_{i=1}^N\log(1+\exp(-b_i(\la \mfa_i, \hat\mfx\ra+y))) +\lambda \|\mfx\|_1,
\end{equation}
where data pairs $(\mfa_i, b_i)\in \mbR^n\times \{-1,1\}$, correspond to a given dataset. The regularization parameter $\lambda>0$ controls the level of sparsity of a solution to \eqref{log_reg}. In our numerical experiments, $\lambda$ is set to be $0.001$. 
\subsubsection{Algorithm details and the implementation}

The solvers include: prox-SVRG \citep{pSVRG}, Adagrad \citep{Adagrad} and SGD. We give an overview of the tested methods:
\begin{itemize}
\item \textbf{prox-SVRG} stands for a variance reduced stochastic proximal gradient method. Similar to \citep{assnm}, we substitute the basic variance reduction technique proposed in \citep{pSVRG} with the mini-batch version \eqref{ora_vr}. 
\item \textbf{Adagrad} is a stochastic proximal gradient method with a specific strategy for choosing adaptive step sizes. We use the mini-batch gradient \eqref{sto_ora} as the first-order oracle in our implementation.
\item \textbf{SGD} is a stochastic proximal gradient method. The mini-batch gradient \eqref{sto_ora} is used as the first-order oracle in our implementation. 
\item \textbf{sF-PG/sFS-PG(r)} stands for the stochastic version of FIRE/ FISC using the proximal gradient. The stochastic oracle \eqref{sto_ora} is used. In FISC, we take $r=3$ and $r=7$.
\item \textbf{sFVR-PG/sFSVR-PG(r)} stands for the stochastic version of FIRE/ FISC using the proximal gradient. The variance reduced stochastic oracle \eqref{ora_vr} is used. In FISC, we take $r=3$ and $r=7$.
\end{itemize}
For all solvers, the sample size is fixed to be $|\mcS_k|=\lfloor0.01N\rfloor$. The proximal operator of the $\ell_1$-norm is given in \eqref{l1_prox}. In SVRG, we set $m=200$ in \eqref{ora_vr}; in sFVR-PG/sFSVR-PG, we set $m=20$ in \eqref{ora_vr}. Here we intentionally set a larger $m$ in SVRG because it generates a higher precision solution.

\subsubsection{The numerical comparison}
The tested datasets obtained from \textsf{libsvm} \citep{libsvm} in our numerical comparison are summerized in Table \ref{tab:data}. We add a row of ones into the data-matrix $\mfA=(\mfa_1, \mfa_2, \dots ,\mfa_n)$ as coefficients for the bias term in our linear classifier. The datasets for multi-class classification have been manually divided into two types of features. For instance, the \textsf{MNIST} dataset is used to classify even and odd digits.
\begin{table}[!htp]
\caption{Information of the datasets in $\ell_1$-logistic regression}
\label{tab:data}
\centering
\begin{tabular}{|c|c|c|c|}
\hline
Data Set & Data Points $N$ & Variables $n$ &Density\\\hline
\textsf{rcv1}&$20,242$&$47,236$&$0.16\%$\\\hline
\textsf{CINA}&$16,033$&$132$&$29.56\%$\\\hline
\textsf{MNIST}&$60,000$&$784$&$19.12\%$\\\hline
\textsf{gisette}&$6,000$&$5,000$&$12.97\%$\\\hline
\textsf{mushroom}&8,124&112&18.75\%\\\hline
\textsf{synthetic}&10,000&50&22.12\%\\\hline
\textsf{tfidf}&16,087&150,360&0.83\%\\\hline
\textsf{log1p}&$16,087$&$4,272,227$&$0.14\%$\\\hline
\end{tabular}
\end{table}

The initial step sizes varies for different tested datasets and it determines the performance of solvers. Hence, we chose the initial step size from set $\{2^i| i\in\{-7, -6, \dots,$
$7\}\}$. For each dataset, we ran the algorithms with these different parameters and selected a parameter that ensured the best overall performance. Table \ref{tab:step_size} gives the initial step size over these datasets. For SGD, sF(S)-PG and sF(S)VR-PG, we use a exponentially decaying step size. Namely, we decrease the step size by multiplying $0.85$ in each epoch. For all methods, we choose $\mfx_0=0$ as the initial point. 
\begin{table}
\caption{Initial step sizes}
\label{tab:step_size}
\centering
\setlength{\tabcolsep}{0.8mm}{
\begin{tabular}{|c|c|c|c|c|c|c|c|}
\hline
Solver &prox-SVRG&Adagrad&SGD&sF-PG&sFS-PG&sFVR-PG&sFSVR-PG\\\hline
\textsf{rcv1}&8&$2^{-4}$&32&32&32&8&16\\\hline
\textsf{CINA}&2&$2^{-3}$&8&8&8&2&2\\\hline
\textsf{MNIST}&0.5&$2^{-5}$&1&1&1&0.5&0.5\\\hline
\textsf{gisette}&0.5&$2^{-5}$&2&1&2&0.5&0.5\\\hline
\textsf{mushroom}&128&8&8&128&128&128&128\\\hline
\textsf{synthetic}&2&0.125&4&4&4&2&2\\\hline
\textsf{tfidf}&2&0.25&1&0.25&0.5&0.25&0.25\\\hline
\textsf{log1p}&32&0.5&16&16&16&32&32\\\hline
\end{tabular}
}
\end{table}

We next show the performance of all methods. The change of the \textit{relative error} $(f(\mfx)-f(\mfx^*))/(\max\{1, |f(\mfx^*)|\})$ is reported with respect to \textit{epochs}.
Here $x^*$ is a reference solution of problem \eqref{log_reg} generated by S2N-D in \citep{assnm} with a stopping criterion $\|\mfx_k-\mfx_{k-1}\|<10^{-12}$. The numerical results are plotted in Figure \ref{epoch_re}
. We average the results over $10$ independent runs except that only one run is used for \textsf{log1p} because the execution time is too long.

\begin{figure}[tbhp]
\centering
\subfloat[\textsf{rcv1}]{\includegraphics[width=5.5cm]{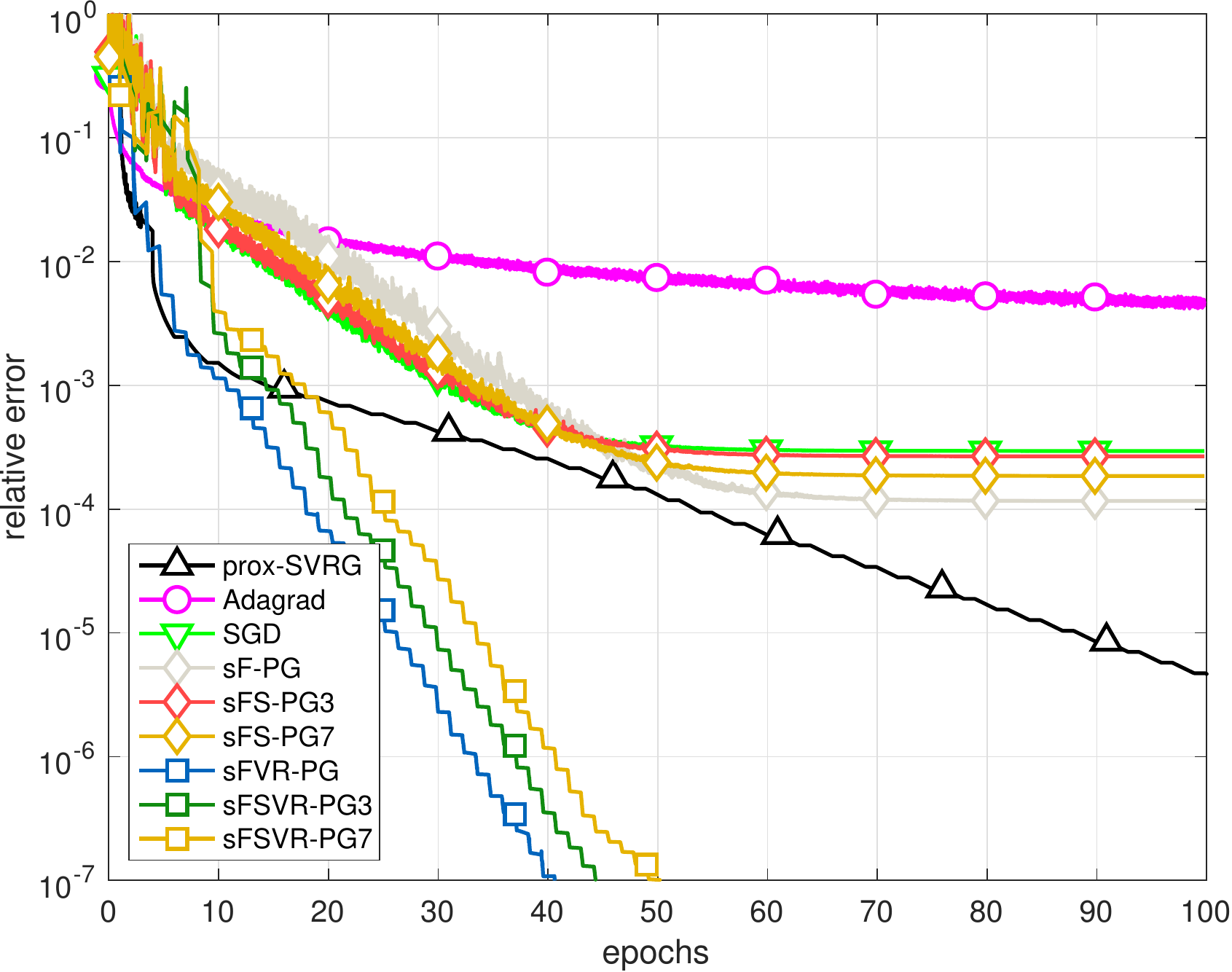}}
\subfloat[\textsf{CINA}]{\includegraphics[width=5.5cm]{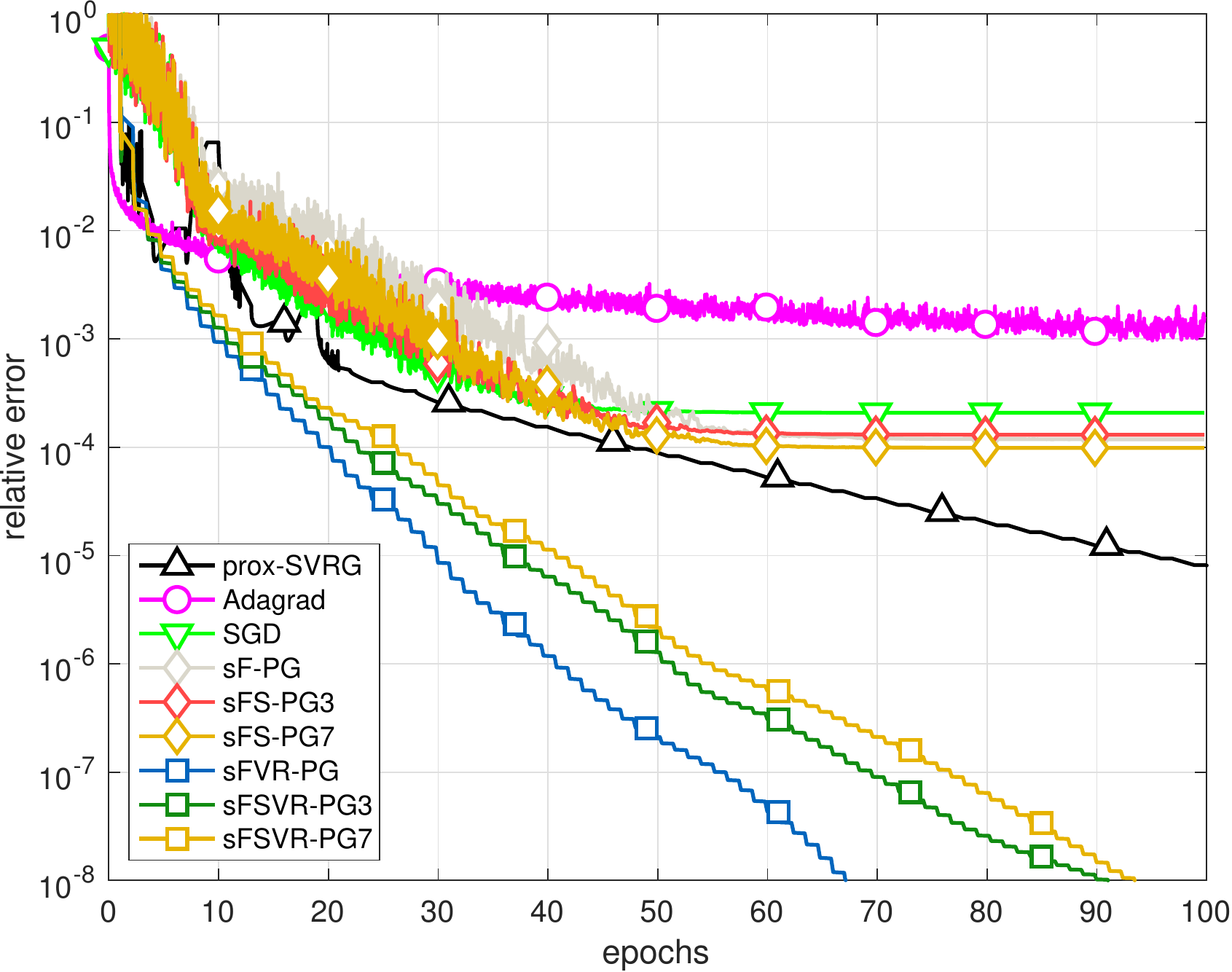}}
\hfill
\subfloat[\textsf{MNIST}]{\includegraphics[width=5.5cm]{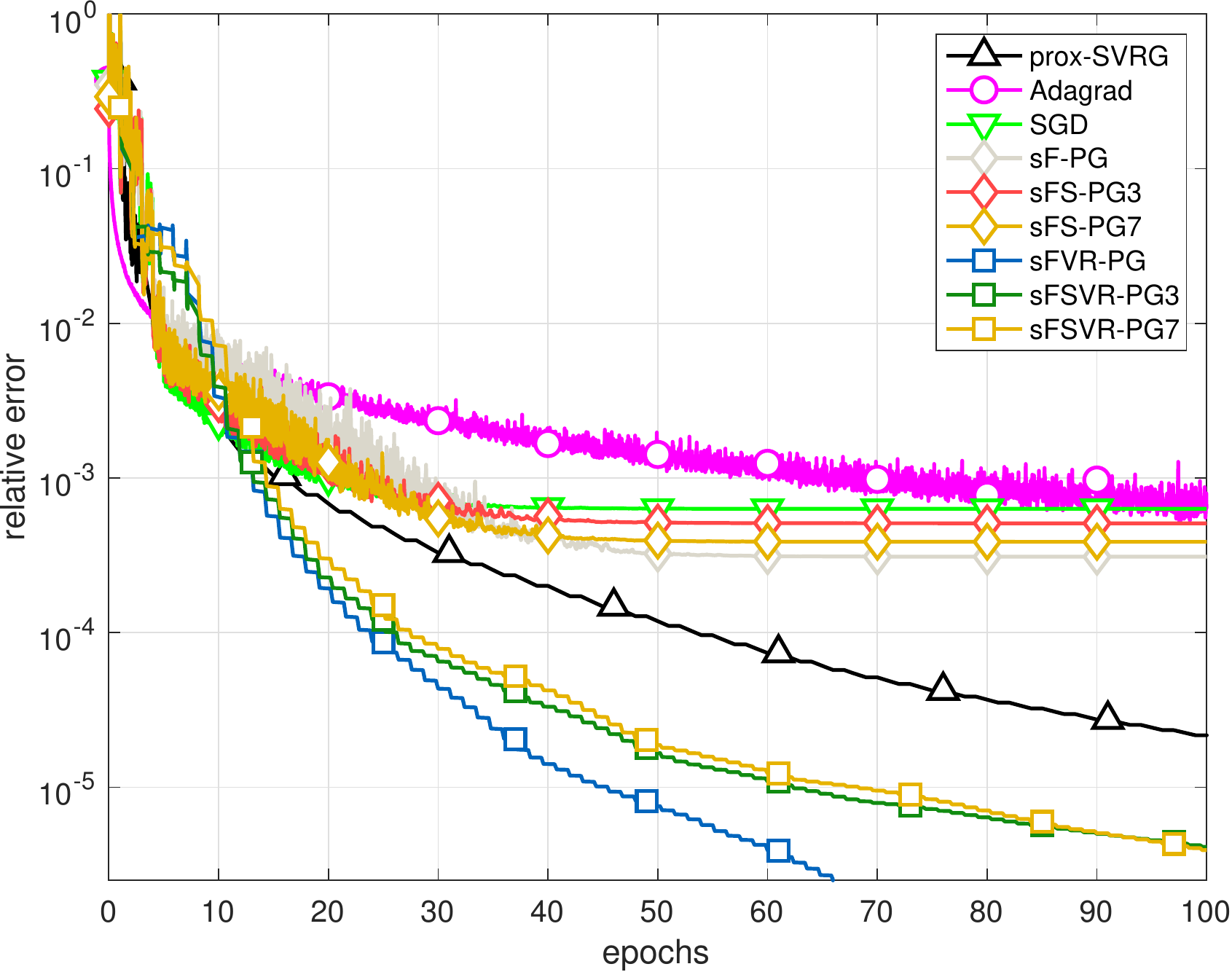}}
\subfloat[\textsf{gisette}]{\includegraphics[width=5.5cm]{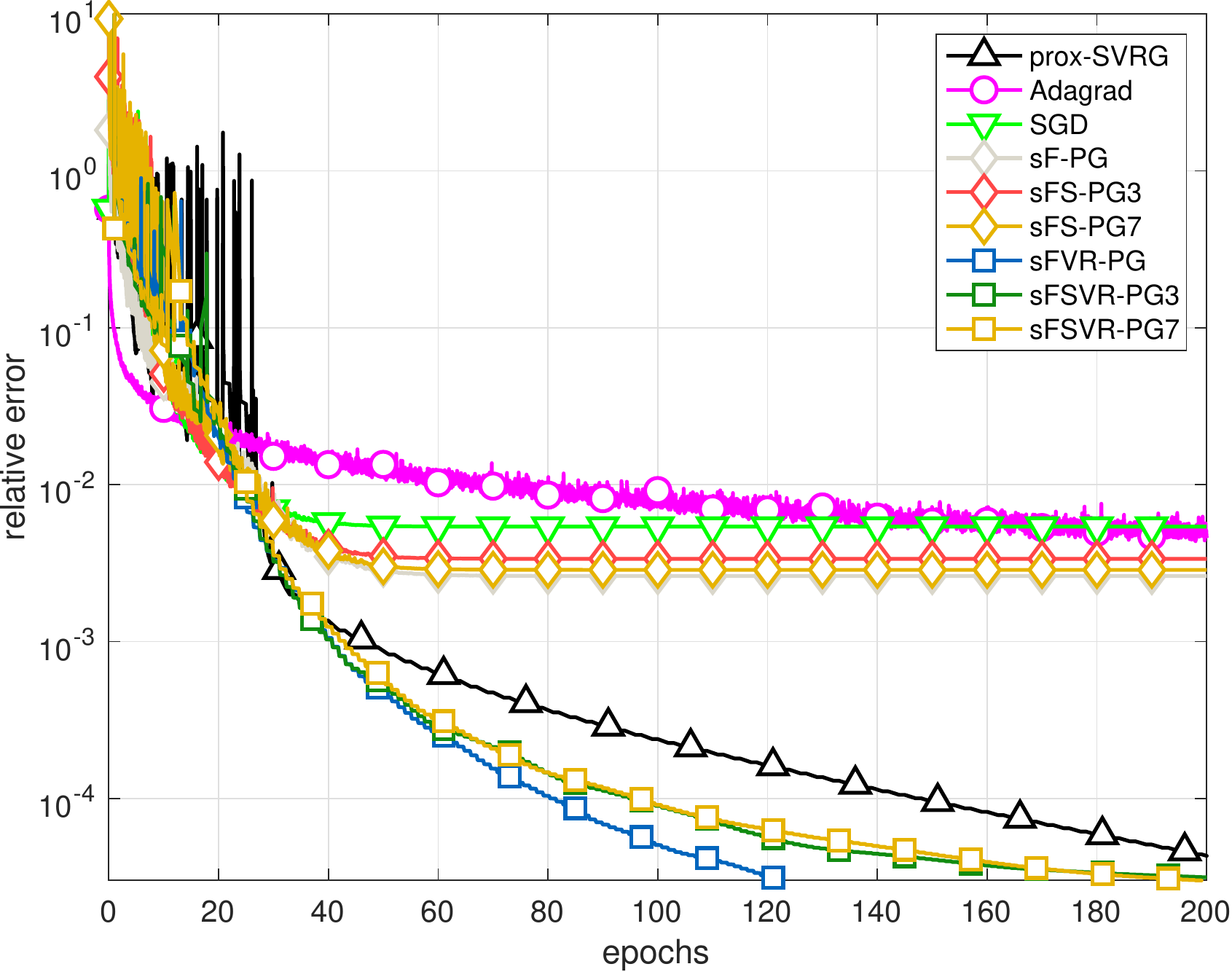}}
\hfill
\subfloat[\textsf{mushroom}]{\includegraphics[width=5.5cm]{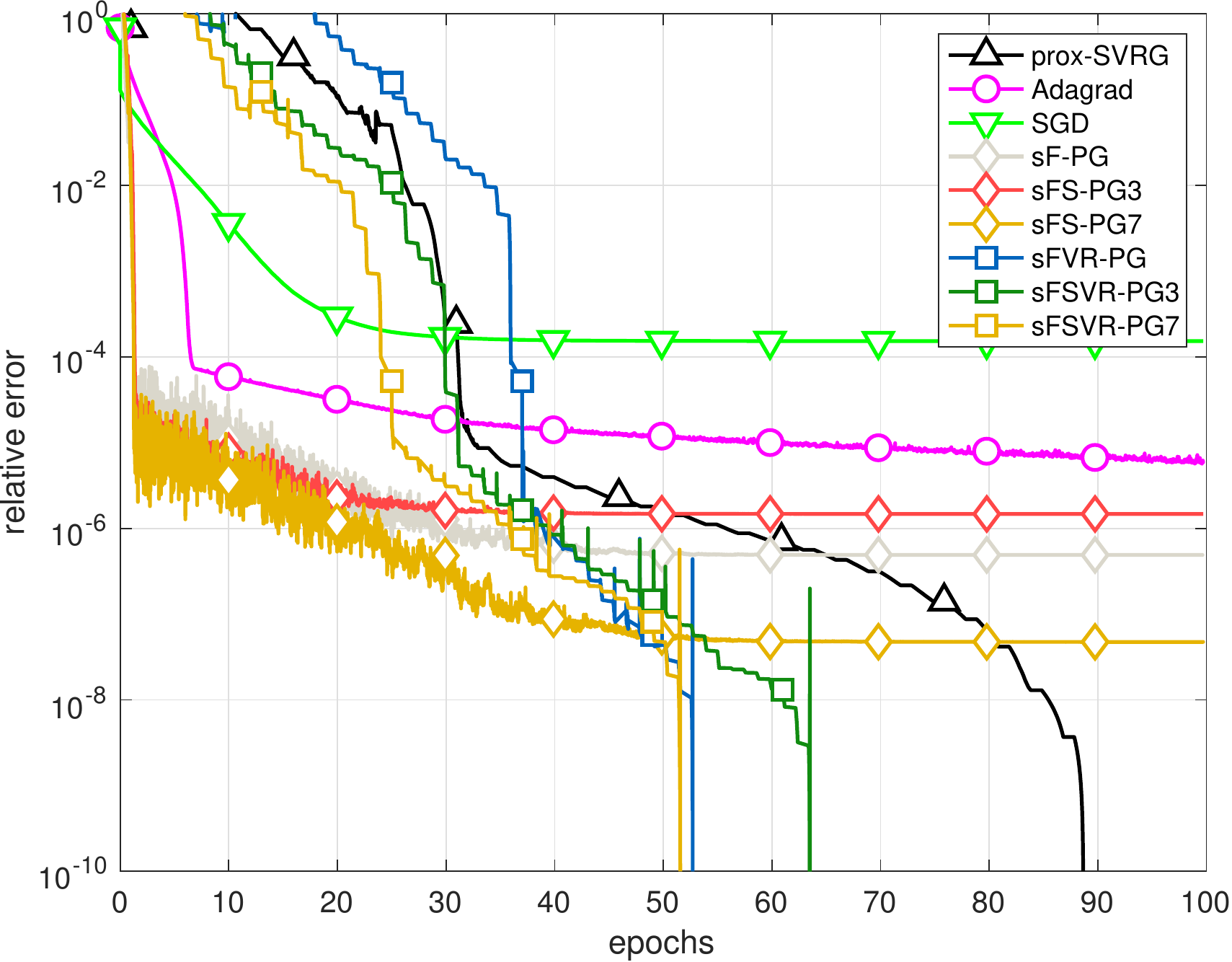}}
\subfloat[\textsf{synthetic}]{\includegraphics[width=5.5cm]{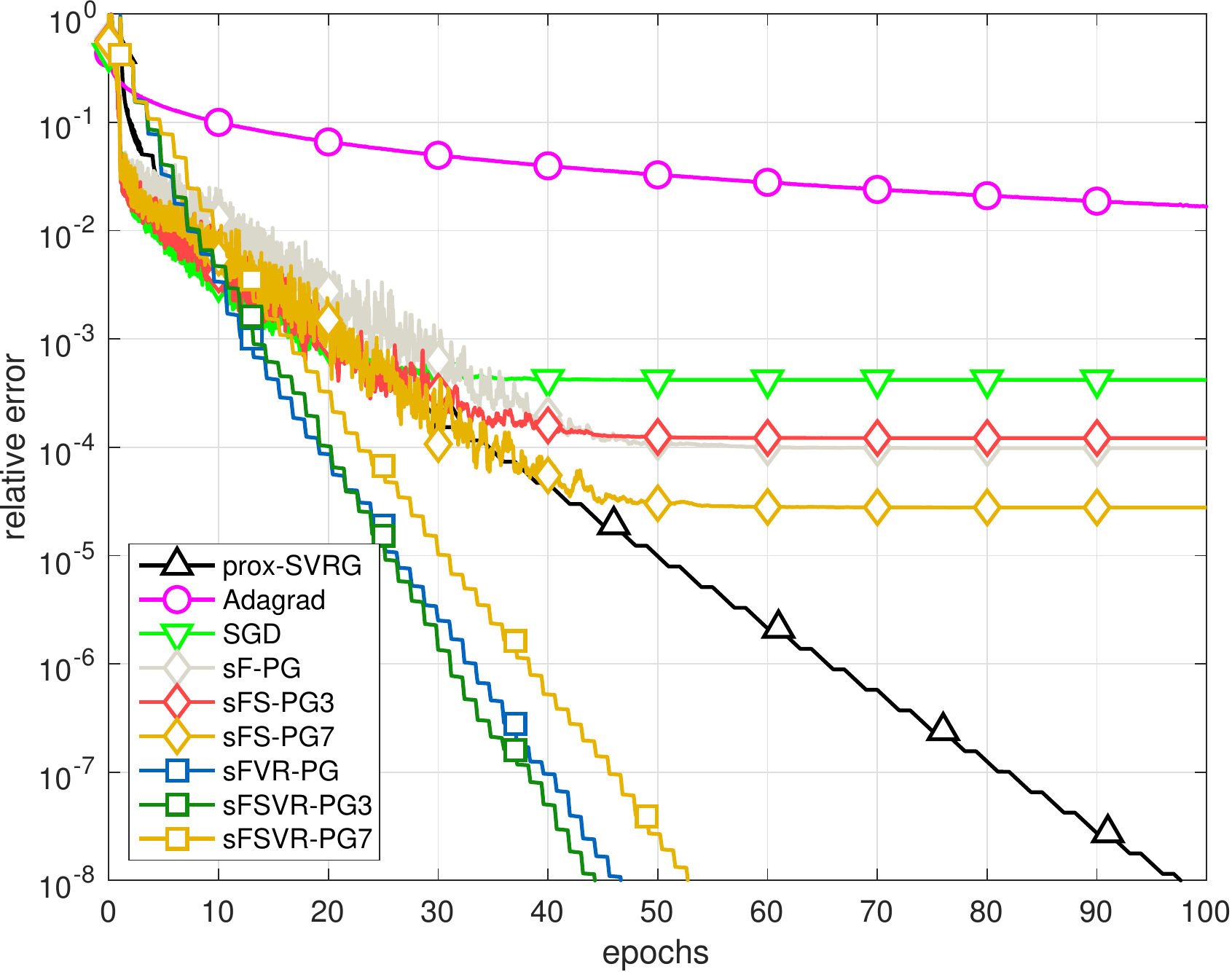}}
\hfill
\subfloat[\textsf{tfidf}]{\includegraphics[width=5.5cm]{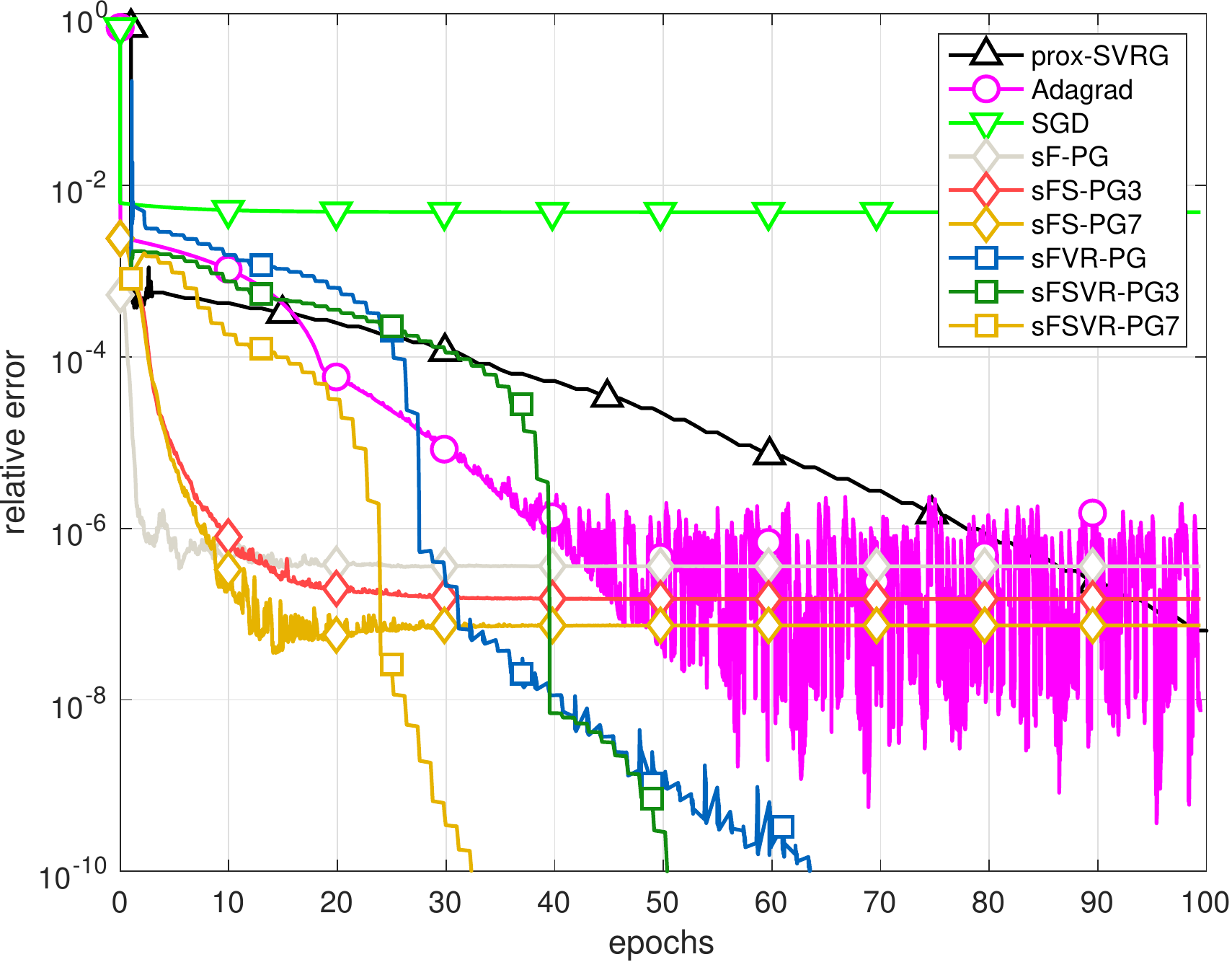}}
\subfloat[\textsf{log1p}]{\includegraphics[width=5.5cm]{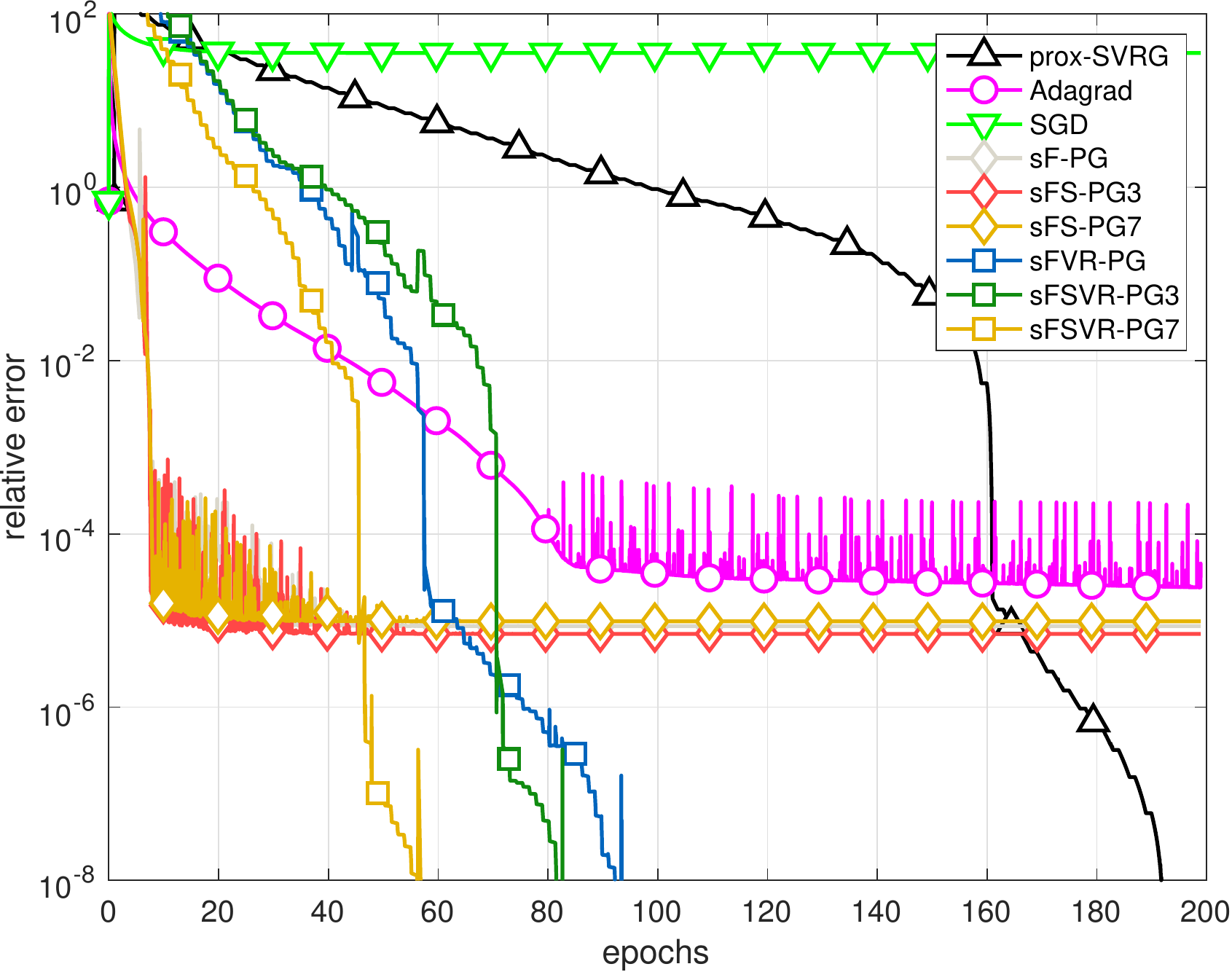}}
\caption{Change of the relative error with respect to the \textit{epochs} for solving the $\ell_1$-logistic regression problem. (Averaged over 10 independent runs, except for \textsf{log1p})}
\label{epoch_re}
\end{figure}

In Figure \ref{epoch_re}, we can roughly split these stochastic methods into two categories: with and without variance reduction techniques. The first category includes sFVR-PG, sFSVR-PG and prox-SVRG, while the second category consists of sF-PG, sFS-PG, SGD and Adagrad. For methods in the first category, we observe that sFVR-PG and sFSVR-PG defeat all other methods, especially in \textit{cpu-time}. sFSVR-PG(7) has competitive performance compared to sFSVR-PG and sFSVR-PG(3). The variance reduction technique seems to be especially well-suited for stochastic FIRE/FISC. On \textsf{log1p}, SVRG decreases slowly in the early stage of the iteration but converges rapidly when the iterates are close to an optimal solution. 

On most test cases, sFS-PG(7) achieves the best performance both with respect to \textit{relative error} and \textit{cpu-time} among other methods,  when variance reduction techniques are not used. Our observation indicates that methods with SDC, i.e., sF-PG and sFS-PG, outperform SGD and Adagrad. On large datasets, like \textsf{tfidf} and \textsf{log1p}, SGD converges to a solution with low precision. Adagrad experiences oscillation after $100$ epochs. Although sF-PG and sFS-PG experience oscillation at first, they finally converge to a precise solution.

In general, sFS-PG(7) is better than sFS-PG(3) and it has similar performance as sF-PG. While sFVR-PG and sFSVR-PG(3) slightly outperform sFSVR-PG(7) in some test cases, sFSVR-PG(7) can lead to a more accurate solution on datasets such as \textsf{mushroom}, \textsf{tfidf} and \textsf{log1p}. Overall, our numerical results indicate that SDC, especially combined with variance reduction techniques, is very promising. 

\subsection{Deep learning}
The optimization problem in deep learning is
$$
\min _{\mfx\in\mbR^n} \frac{1}{N} \sum_{i=1}^{N} l\left(f\left(\mathbf{a}^{(i)}, \mfx\right), b^{(i)}\right)+\lambda\|\mfx\|_2^2,
$$
where $\mfx$ denotes the parameters for training, data pairs $\{(\mfa^{(i)},b^{(i)})\}$ correspond to a given dataset, $f(\cdot, \mfx)$ represents the function determined by the neural network architecture, $l(\cdot,\cdot)$ denotes the loss function and $\lambda$ is the coefficient of weight decay ($\ell_2$-regularization). 

We evaluate our proposed algorithm on deep learning for image classification tasks using the benchmark datasets: CIFAR-10 and CIFAR-100 \citep{cifar}. CIFAR-10 is a database of images from 10 classes and CIFAR-100 consists of images drawn from 100 classes. Both of them consist of 50,000 training images and 10,000 test images. We normalize the data using the channel means and standard deviations for preprocessing. The neural network architectures include DenseNet121 \citep{dccn} and ResNet34 \citep{drlfi}. The number of parameters is listed in Table \ref{cifar_para}.
\begin{table}
\caption{The number of parameters of DenseNet12/ResNet34 on CIFAR-10/CIFAR-100}
\label{cifar_para}
\centering
\begin{tabular}{|c|c|c|}
\hline
&DenseNet121&ResNet34\\
\hline
CIFAR-10&6,956,298&21,282,122\\\hline
CIFAR-100&7,048,548&21,328,292\\
\hline
\end{tabular}
\end{table}

The implemented algorithms include SGD with momentum (MSGD), Adam \citep{adam}, FIRE and FISC with $r=7$. The initial learning rate for different methods is given in Table \ref{cifar_lr}. On CIFAR-10, we train the network using a batch size $128$ for $200$ epochs. The coefficient $\lambda$ is $5\times 10^{-4}$. The learning rate is decreased $10$ times at epoch $150$. On CIFAR-100, we train the network using a batch size $64$ for $300$ epochs and $\lambda$ is $1\times 10^{-4}$. The learning rate is multiplied by $0.1$ at epoch $150$ and epoch $225$. For both datasets, the momentum factor is $0.9$ in MSGD, FIRE and FISC; $(\beta_1, \beta_2)$ in Adam are $(0.9, 0.999)$ on DenseNet and $(0.99, 0.999)$ on ResNet; $\epsilon$ in Adam is $10^{-8}$. 

\begin{table}
\caption{The initial learning rate.}
\label{cifar_lr}
\centering
\begin{tabular}{|c|c|c|}
\hline
&CIFAR-10&CIFAR-100\\
\hline
MSGD&0.1&0.1\\\hline
Adam&0.001&0.001\\\hline
FIRE&0.01&0.1\\\hline
FISC&0.01&0.1\\
\hline
\end{tabular}
\end{table}

\begin{figure}[htbp]
\centering
\begin{minipage}[t]{0.45\textwidth}
\centering
\includegraphics[width=\linewidth]{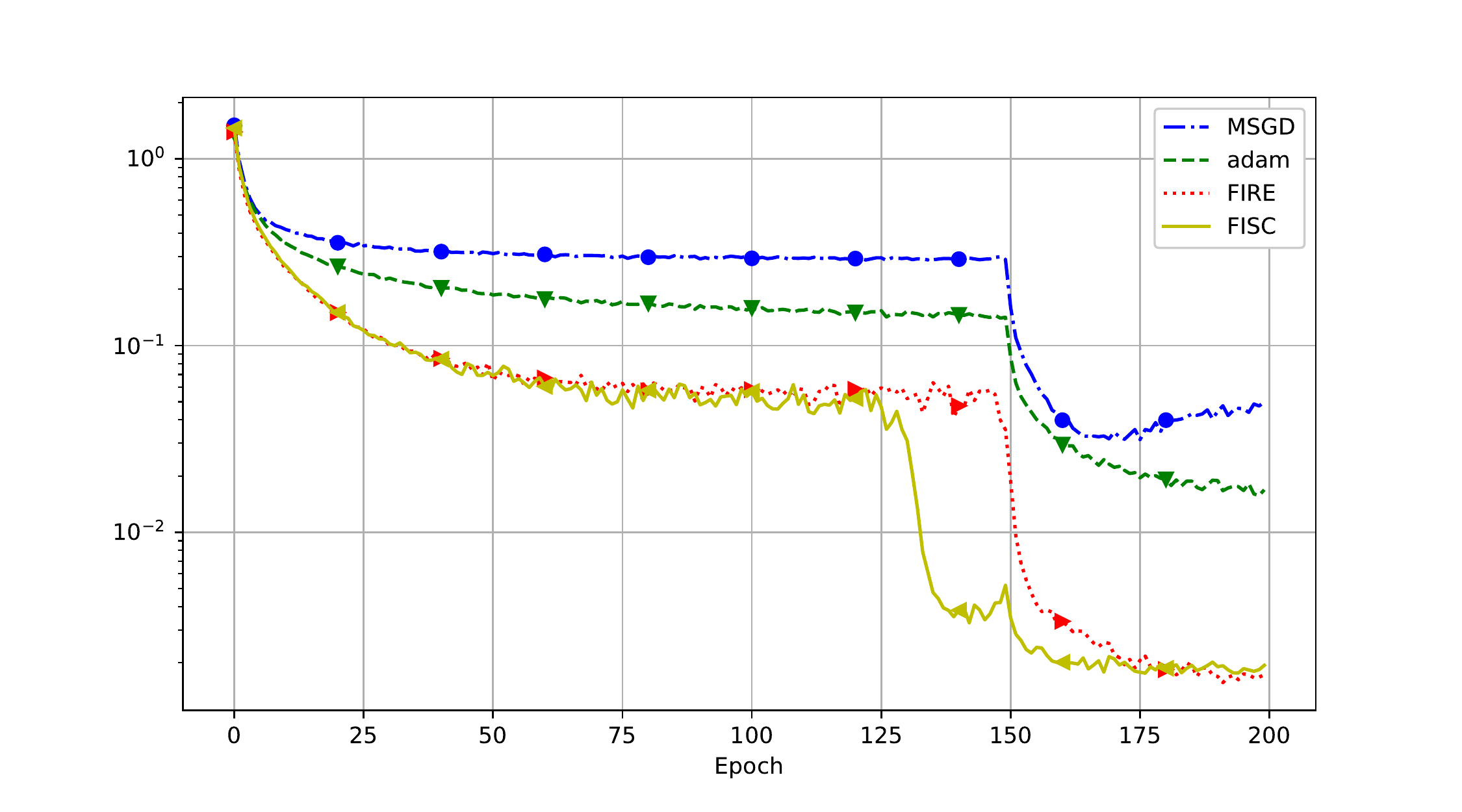}
\caption*{\small{Training Loss, DenseNet121}}
\end{minipage}
\begin{minipage}[t]{0.45\textwidth}
\centering
\includegraphics[width=\linewidth]{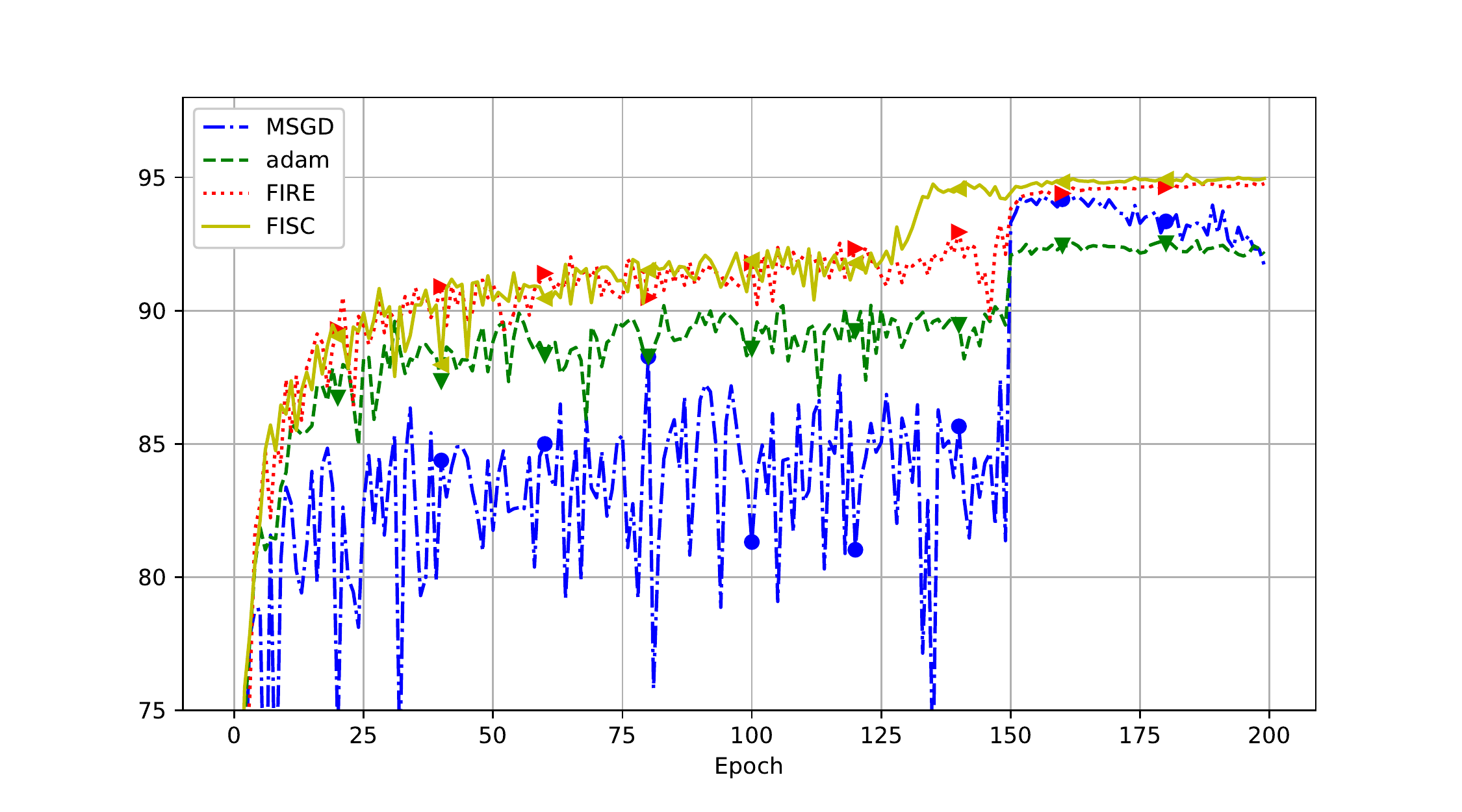}
\caption*{\small{Test Accuracy, DenseNet121}}
\end{minipage}
\hfill
\begin{minipage}[t]{0.45\textwidth}
\centering
\includegraphics[width=\linewidth]{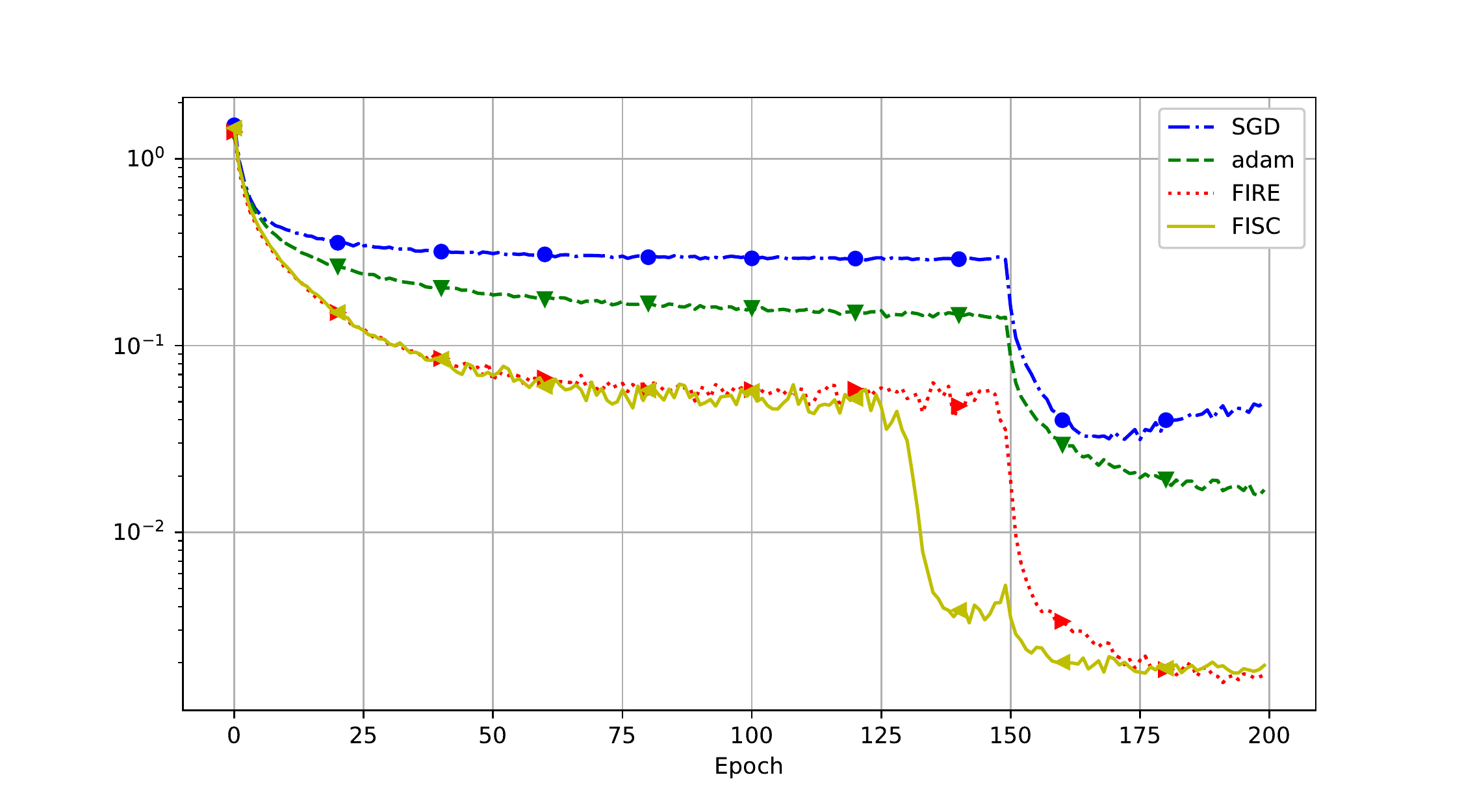}
\caption*{\small{Training Loss, ResNet121}}
\end{minipage}
\begin{minipage}[t]{0.45\textwidth}
\centering
\includegraphics[width=\linewidth]{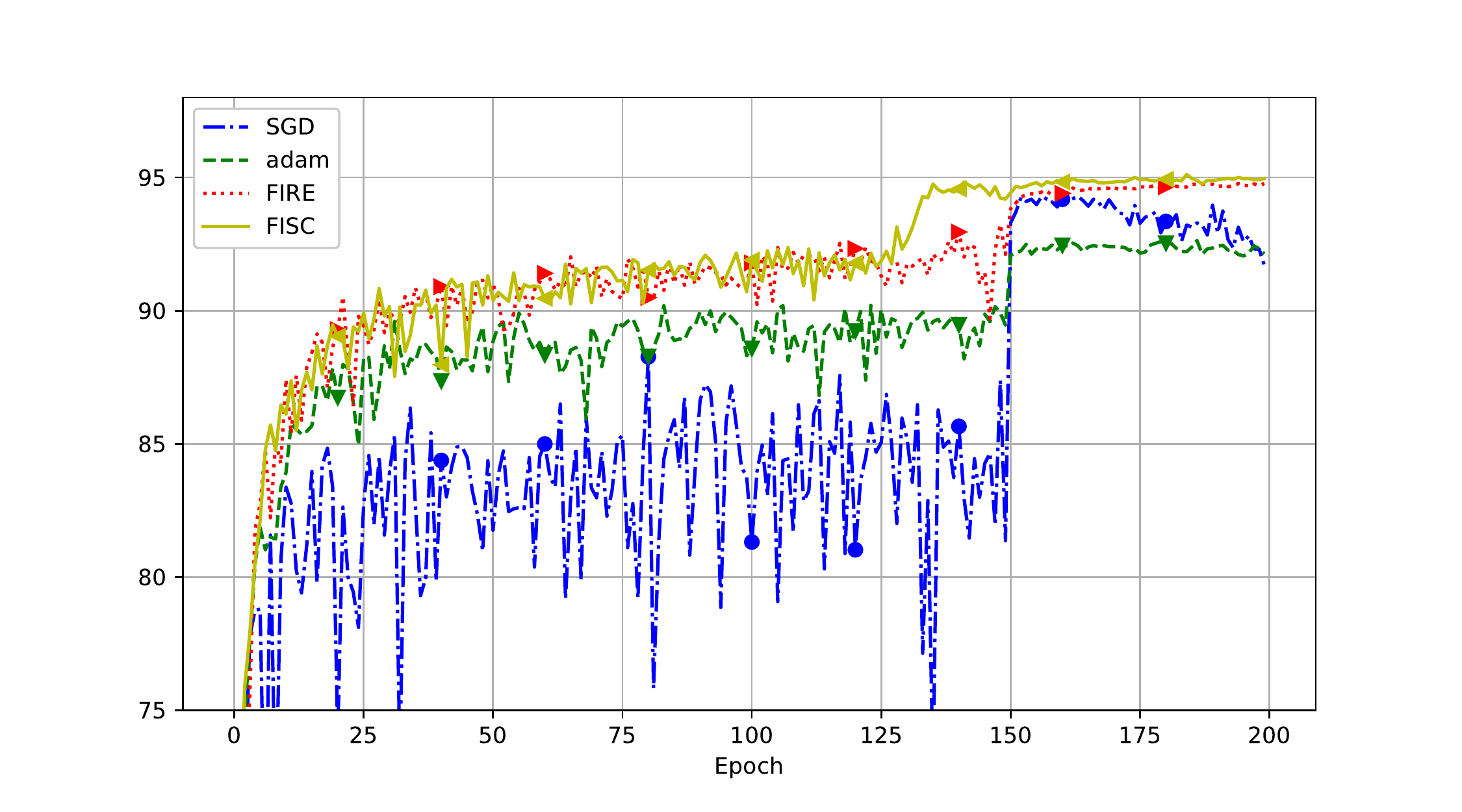}
\caption*{\small{Test Accuracy, ResNet121}}
\end{minipage}
\caption{Numerical results on CIFAR-10. Top: DenseNet121; Bottom: ResNet34.}\label{dense}
\end{figure}

\begin{figure}[htbp]
\centering
\begin{minipage}[t]{0.45\textwidth}
\centering
\includegraphics[width=\linewidth]{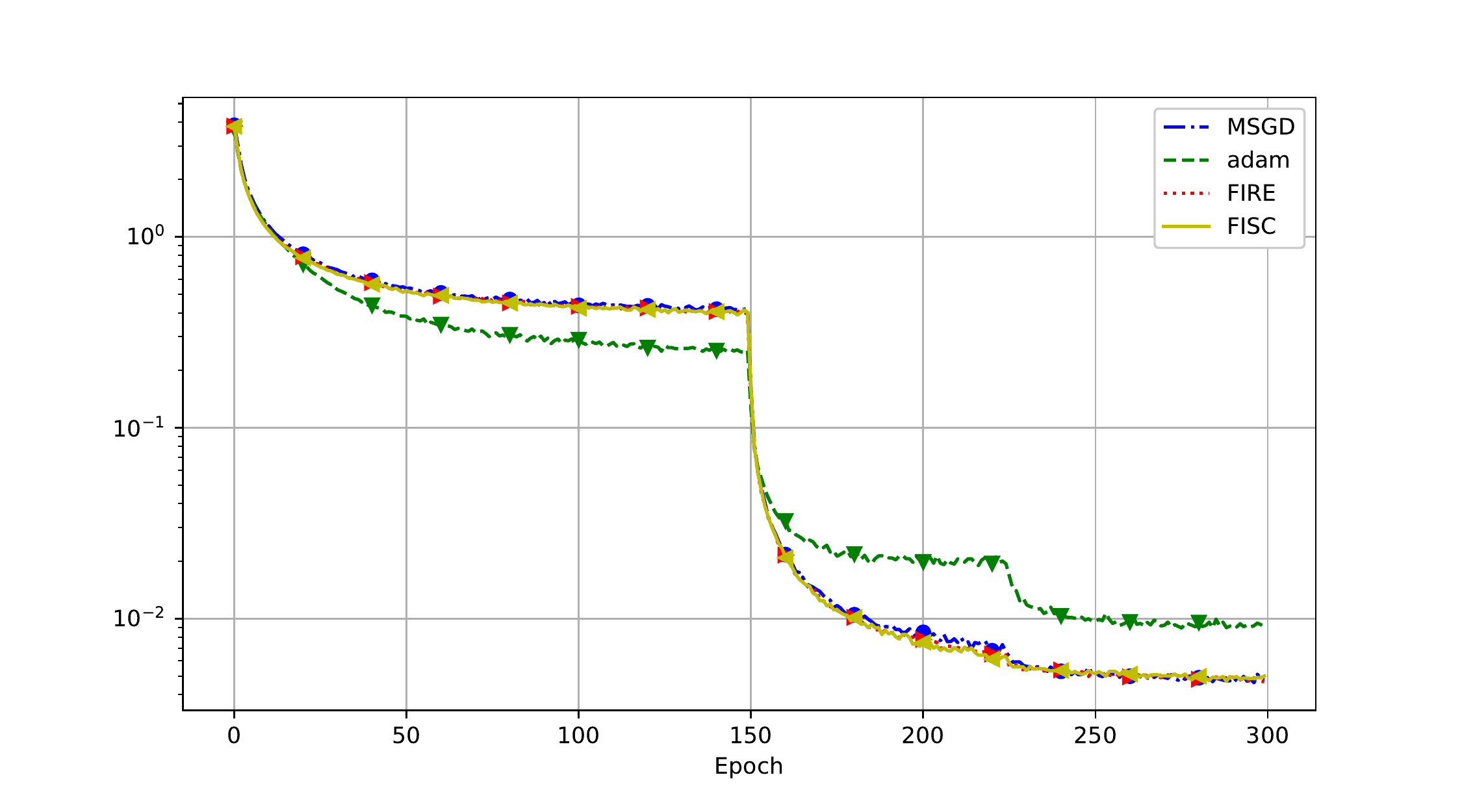}
\caption*{\small{Training Loss, DenseNet121}}
\end{minipage}
\begin{minipage}[t]{0.45\textwidth}
\centering
\includegraphics[width=\linewidth]{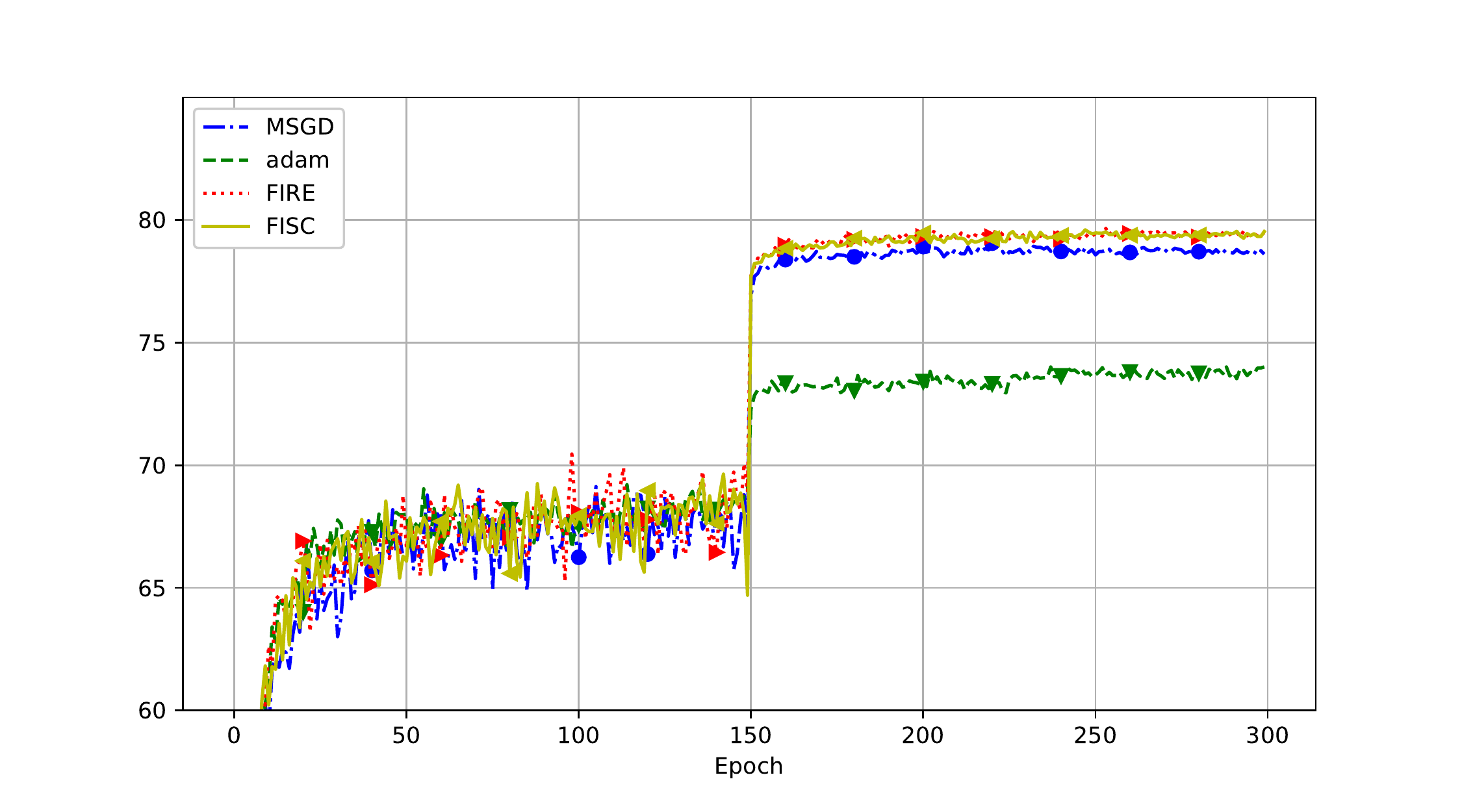}
\caption*{\small{Test Accuracy, DenseNet121}}
\end{minipage}
\hfill
\begin{minipage}[t]{0.45\textwidth}
\centering
\includegraphics[width=\linewidth]{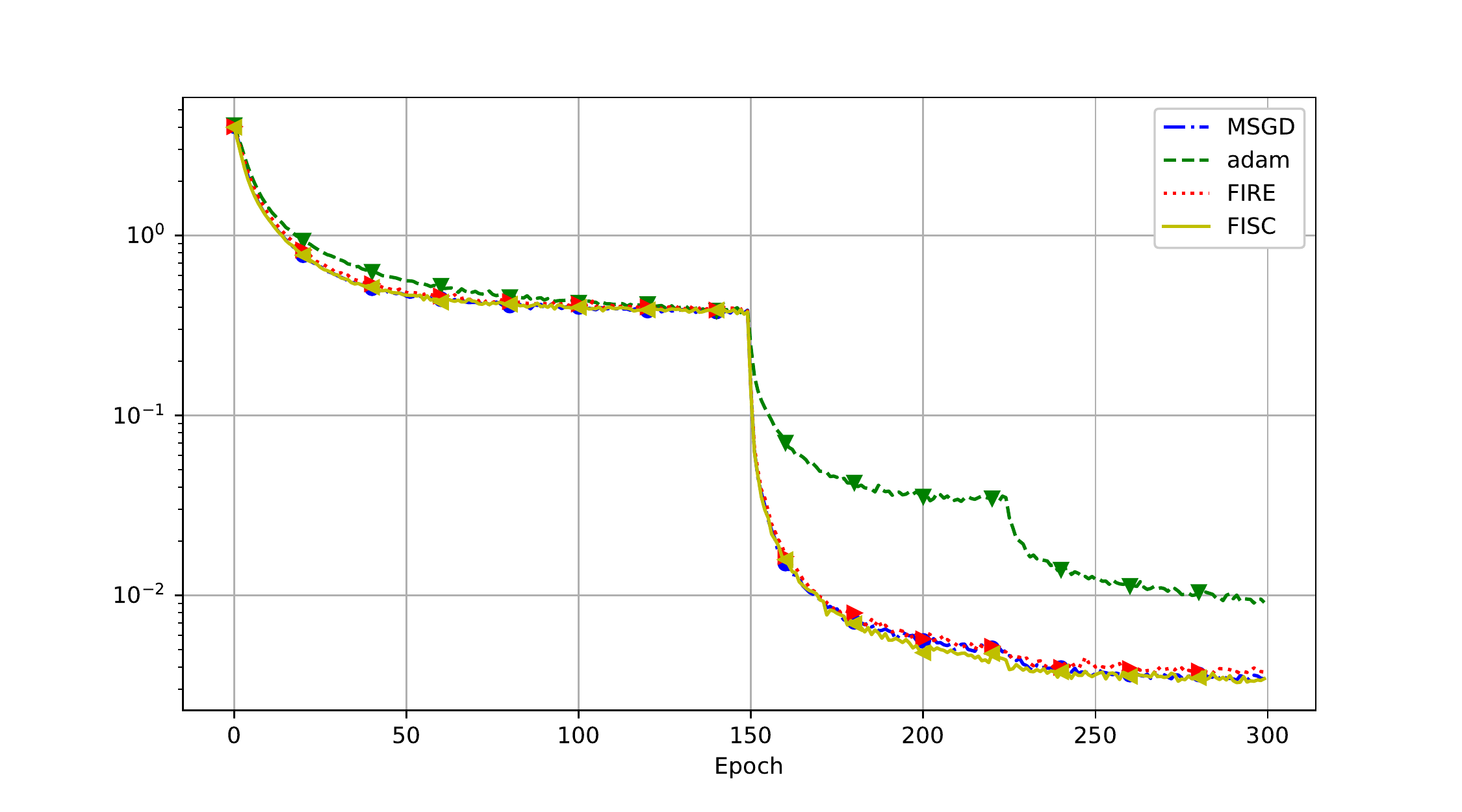}
\caption*{\small{Training Loss, ResNet121}}
\end{minipage}
\begin{minipage}[t]{0.45\textwidth}
\centering
\includegraphics[width=\linewidth]{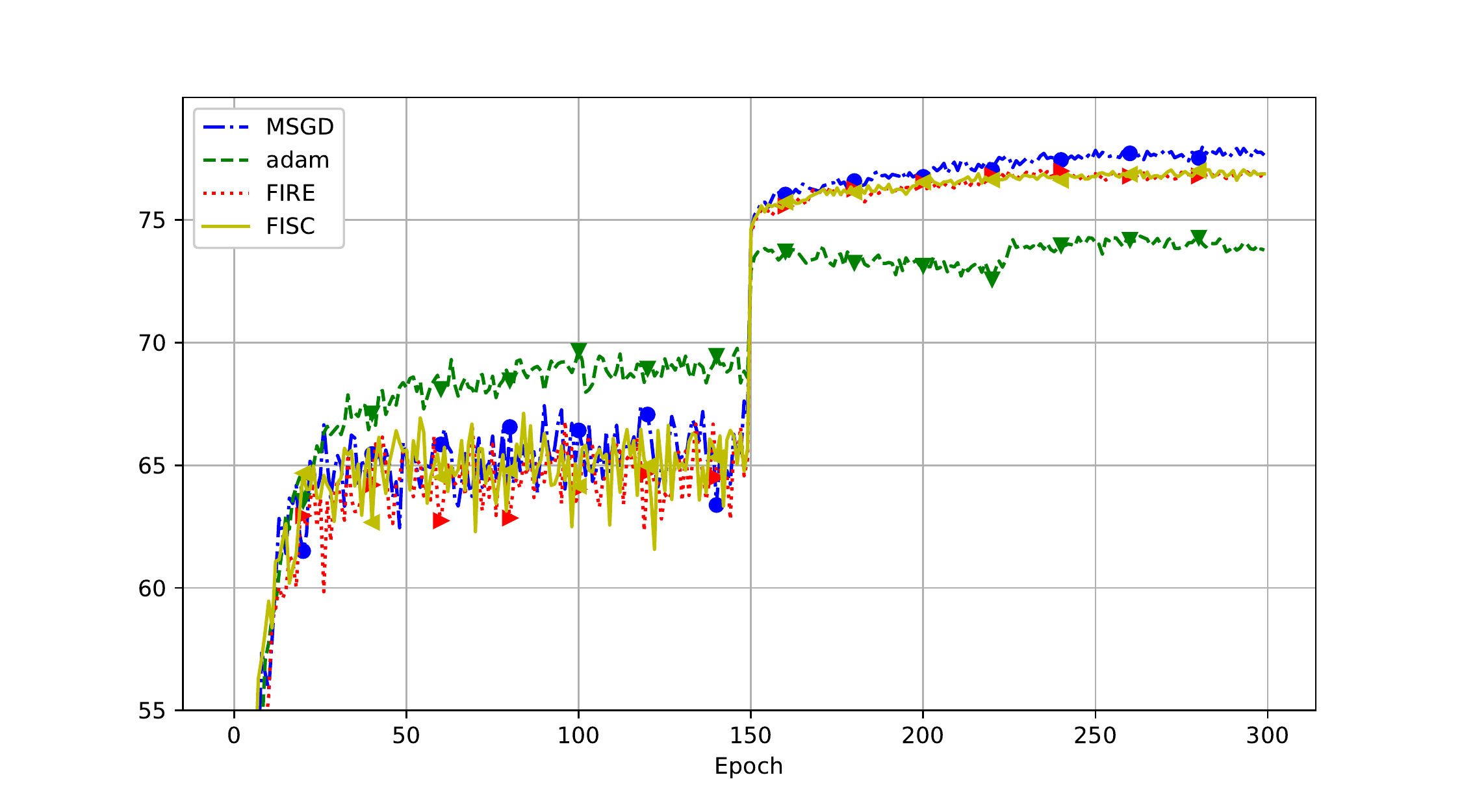}
\caption*{\small{Test Accuracy, ResNet121}}
\end{minipage}
\caption{Numerical results on CIFAR-100. Top: DenseNet121; Bottom: ResNet34.}\label{res}
\end{figure}

Figure \ref{dense} and \ref{res} show that on CIFAR-10, FISC and FIRE have better performance than MSGD and Adam from the very beginning, especially in training loss. On CIFAR-10 with DenseNet, the test accuracy of FISC approaches $95\%$ around epoch 130. On CIFAR-100 with DenseNet, FISC and FIRE outperform MSGD and Adam in test accuracy. This further illustrates the strength of SDC.

\section{Conclusion}
In this paper, we propose a family of first-order methods with SDC. The restarting criterion is the foundation for the global converge of methods with SDC. From an ODE perspective, we construct the FISC-ODE with an $\mcO(t^{-2})$ convergence rate. FISC-PG shows excellent performance in numerical experiments, while FISC-PM has a provable $\mcO(k^{-2})$ convergence rate. Numerical experiments indicate that our algorithmic framework with SDC is competitive and promising. 

\acks{Zaiwen would like to thank Lin Lin and Chao Yang for the kind introduction to and discussion on the FIRE method. Yifei and Zeyu's work is supported in part by the elite undergraduate training program from the School of Mathematical Sciences at Peking University.  Zaiwen's work is supported in part by the NSFC grants 11421101 and 11831002, and by the National Basic Research Project under the grant 2015CB856002.
}

\vskip 0.2in
\bibliography{DeepL}

\end{document}